\definecolor{darkblue}{rgb}{0.2,0.2,0.6}
\definecolor{darkred}{rgb}{0.6,0.1,0.1}
\newcommand{\eg}{\emph{e.g.}}
\newcommand{\cf}{\emph{cf.}}
\newcommand\nb{\nabla}
\newcommand{\beq}{\begin{equation} \begin{split}}
\newcommand{\eeq}{\end{split} \end{equation}}
\newcommand\Omg{\Omega}
\newcommand\Omgc{\Omega^{\rm c}}
\renewcommand\and{\qquad\text{and}\qquad}
\newcommand\sm{\setminus}
\newcommand{\comm}[1]{}
\def\sfH{\mathsf{H}}
\def\bm1{\mathbbm{1}}
\def\G{\Gamma}
\def\s{\sigma}
\def\p{\partial}
\def\omg{\omega}
\def\arr{\rightarrow}
\newcommand{\sfJ}{\mathsf{J}}
\renewcommand{\gg}{{\gamma}}
\def\tt{\theta}
\def\aa{\alpha}
\def\lm{\lambda}
\def\s{\sigma}
\def\sess{\sigma_{\rm ess}}
\def\ii{{\mathsf{i}}}
\def\p{\partial}
\def\kp{\kappa}
\def\sfH{\mathsf{H}}
\def\dd{{\,\mathrm{d}}}
\def\der{{\mathrm{d}}}
\def\omg{\omega}
\def\sfU{\mathsf{U}}
\newcounter{counter_a}
\newenvironment{myenum}{\begin{list}{{\rm(\roman{counter_a})}}%
{\usecounter{counter_a}
\setlength{\itemsep}{1.ex}\setlength{\topsep}{0.8ex}
\setlength{\leftmargin}{5ex}\setlength{\labelwidth}{5ex}}}{\end{list}}
\numberwithin{figure}{section}
\numberwithin{equation}{section}
\theoremstyle{plain}
\newtheorem*{thm*}{Theorem}
\newtheorem{thm}{Theorem}[section]
\newtheorem{prop}[thm]{Proposition}
\newtheorem{cor}[thm]{Corollary}
\newtheorem{conj}[thm]{Conjecture}
\newtheorem{dfn}[thm]{Definition}
\theoremstyle{remark}
\newtheorem{remark}[thm]{Remark}
\theoremstyle{plain}
\newcommand{\beu}{\begin{equation*}}
\newcommand{\eeu}{\end{equation*}}
\newcommand{\besu}{\begin{equation*}
\begin{aligned}}
\newcommand{\eesu}{\end{aligned}
\end{equation*}}
\newcommand{\bes}{\begin{equation}
\begin{aligned}}
\newcommand{\ees}{\end{aligned}
\end{equation}}
\newcommand\cBc{{\mathcal B}^{\rm c}}
\newcommand\cB{\mathcal B}
\newcommand\cL{\mathcal L}
\newcommand\frh{\mathfrak h}
\newcommand\ov{\overline}
\DeclareMathOperator\dom{dom}
\DeclareMathOperator\ran{ran}
\newcommand\void[1]{}
\def\ov{\overline}
\def\ran{{\rm ran\,}}
      \def\dC{{\mathbb C}}
   \def\dN{{\mathbb N}}   
      \def\dR{{\mathbb R}}
   \def\dZ{{\mathbb Z}}
   \def\sfH{{\mathsf H}}   
\def\sfJ{{\mathsf J}}
      \def\sfU{{\mathsf U}}
   \def\cB{{\mathcal B}}
      \def\cL{{\mathcal L}}
      \def\cO{{\mathcal O}}
\definecolor{DarkGreen}{rgb}{0,0.5,0.1}
\definecolor{DarkBlue}{rgb}{0,0.1,0.5}
\newcommand\soutD{\bgroup\markoverwith
	{\textcolor{DarkGreen}{\rule[.5ex]{2pt}{1pt}}}\ULon}
\newcommand\soutP{\bgroup\markoverwith
	{\textcolor{blue}{\rule[.5ex]{2pt}{1pt}}}\ULon}
\newcommand{\Hm}[1]{\leavevmode{\marginpar{\tiny%
			$\hbox to 0mm{\hspace*{-0.5mm}$\leftarrow$\hss}%
			\vcenter{\vrule depth 0.1mm height 0.1mm width \the\marginparwidth}%
			\hbox to
			0mm{\hss$\rightarrow$\hspace*{-0.5mm}}$\\
			\relax\raggedright #1}}}
\begin{document}

\title[Optimisation and monotonicity of the second Robin eigenvalue on a planar exterior domain]{Optimisation and monotonicity of the second Robin eigenvalue on a planar exterior domain}

\author[D.~Krej\v{c}i\v{r}\'{i}k]{David Krej\v{c}i\v{r}\'{i}k}
\address{(D.~Krej\v{c}i\v{r}\'{i}k) Department of Mathematics\\ Faculty of Nuclear Sciences and Physical
	Engineering\\
	Czech Technical University in Prague\\ Trojanova 13, 120 00, Prague, Czech
	Republic 
}
\email{david.krejcirik@fjfi.cvut.cz}
\author[V.~Lotoreichik]{Vladimir Lotoreichik}
\address{(V.~Lotoreichik)
	Department of Theoretical Physics\\
	Nuclear Physics Institute, Czech Academy of Sciences, 
	25068 \v{R}e\v{z}, Czech Republic
}
\email{lotoreichik@ujf.cas.cz}
\subjclass{}

\begin{abstract}
We consider the Laplace operator in the exterior of a compact set in the plane,
subject to Robin boundary conditions.  
If the boundary coupling is sufficiently negative, 
there are at least two discrete eigenvalues below the essential spectrum.
We state a general conjecture that the second eigenvalue 
is maximised by the exterior of a disk under 
isochoric or isoperimetric constraints.
We prove an isoelastic version of the conjecture 
for the exterior of convex domains.
Finally, we establish a monotonicity result for the second eigenvalue
under the condition that the compact set is strictly star-shaped 
and centrally symmetric.
\end{abstract}

\keywords{}

\maketitle

\section{Introduction}
\subsection{Motivations}
Spectrally optimal shapes are traditionally studied for bounded domains.
One motive is the classical interpretation of eigenvalues 
in terms of resonant frequencies of vibrating systems. 
Another reason is the necessity to ensure the very existence 
of the eigenvalue to be optimised. 
Indeed, taking the Dirichlet (or Neumann)
Laplacian as the traditional differential operator
whose eigenvalues should be optimised,
it is well known that
its spectrum is purely discrete for all (sufficiently regular) bounded sets. 

The development of mesoscopic physics has brought unprecedented motivations to study 
spectral properties of unbounded domains within the framework of quantum theory.
Then the desired playground 
for analogues of classical spectral isoperimetric-type inequalities 
is the exterior of a compact set. 
However, it is \emph{a priori} not clear whether eigenvalues exist there.
In fact, there is no discrete spectrum 
for the Dirichlet and Neumann boundary conditions.

The present authors remarked previously in \cite{KL18,KL20}
that attractive Robin boundary conditions lead to equally interesting 
spectral-optimisation problems in the exterior. 
The observation is based on the fact that 
given any bounded smooth planar domain~$\Omega$   
and denoting by~$\Omgc :=\dR^2\sm\ov\Omg$ its exterior,
the spectral threshold

\begin{equation}\label{Rayleigh}
  \lambda_1^\alpha(\Omgc)
  :=
  \inf_{ u \in H^1(\Omgc) \setminus \{0\} }
  \frac{\displaystyle \int_{\Omgc}|\nb u|^2 
  +\alpha\int_{\p\Omg}|u|^2}{\displaystyle\int_{\Omgc}|u|^2}
\end{equation}
represents the first (negative) eigenvalue of the Robin Laplacian 
in $L^2(\Omgc)$ whenever~$\alpha$ is negative.
Keeping~$\alpha$ negative and enlarging~$|\alpha|$,
one can produce an arbitrary number of eigenvalues
below the essential spectrum.
Establishing brand-new spectral isoperimetric-type inequalities 
for the eigenvalues constitutes an interesting direction of 
current research in spectral geometry.

The classical interpretation of Robin boundary conditions with positive~$\alpha$
for a bounded domain is that of a vibrating membrane with edges attached by
springs of positive friction (the limit $\alpha \to \infty$ realises
Dirichlet boundary conditions corresponding to fixed edges). 
Negative values of~$\alpha$ can be interpreted as a supply of energy
at the boundaries.

We rather rely on a quantum-mechanical interpretation for which
the Robin Laplacian is the Hamiltonian of a quantum particle 
constrained to a possibly unbounded domain (modelling a nanostructure), 
subject to a strongly localised potential at the boundary. 
Positive and negative values of~$\alpha$ correspond to 
repulsive and attractive interactions, respectively.
Eigenvalues then represent bound-state energies of the particle,
associated with stationary solutions of the Schr\"odinger equation.

\subsection{The first eigenvalue}
The main spectral-optimisation results of \cite{KL18,KL20} 
in the planar case can be summarised as follows.

\begin{thm}[\cite{KL18,KL20}]\label{Thm.lambda1}
If~$\Omega$ is simply connected, then
\begin{enumerate}
\item[\emph{(a)}]
$ 
  \lambda_1^\alpha(\Omgc) \leq \lambda_1^\alpha(\cBc)
$
where~$\cB$ is the disk of the same area as~$\Omega$;
 \item[\emph{(b)}]
$ 
  \lambda_1^\alpha(\Omgc) \leq \lambda_1^\alpha(\cBc)
$
where~$\cB$ is the disk of the same perimeter as~$\Omega$.
\end{enumerate}
\end{thm}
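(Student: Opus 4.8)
The plan is to collapse the two–dimensional minimisation~\eqref{Rayleigh} onto a one–parameter family of one–dimensional weighted problems by means of parallel coordinates, and then to deform the weight, exploiting that the eigenvalue in question is negative.

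\emph{Reduction.} I would first set $t(x):=\operatorname{dist}(x,\p\Omg)$ for $x\in\Omgc$; this is a Lipschitz function with $|\nb t|=1$ almost everywhere, so for a profile $f\colon[0,\infty)\to\R$ and $u:=f\circ t$ the coarea formula turns $\int_{\Omgc}|\nb u|^2$, $\int_{\Omgc}|u|^2$ and $\int_{\p\Omg}|u|^2$ into $\int_0^\infty (f')^2\,\ell\,\D t$, $\int_0^\infty f^2\,\ell\,\D t$ and $f(0)^2\ell_0$ respectively, where $\ell_0:=|\p\Omg|$ and $\ell(t):=\mathcal{H}^1\bigl(\{x\in\Omgc:t(x)=t\}\bigr)$ is the length of the outer parallel curve at distance~$t$. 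Restricting~\eqref{Rayleigh} to test functions of this form then gives $\lambda_1^\alpha(\Omgc)\le\mu(\ell)$, where for a density~$w$ I write $\mu(w)$ for the bottom of the spectrum of the one–dimensional Robin operator on $(0,\infty)$ with density~$w$ and coupling~$\alpha$ at the origin — an eigenvalue, since it sits below the essential spectrum $[0,\infty)$. Because the ground state on the exterior of a disk is rotation invariant, $\mu\bigl(2\pi(R+\cdot)\bigr)=\lambda_1^\alpha(B_R^c)$ for every $R>0$; so, in case~(b), where the comparison disk $\cB$ has radius $R=\ell_0/2\pi$, it is enough to prove $\mu(\ell)\le\mu\bigl(2\pi(R+\cdot)\bigr)$.

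\emph{Geometric input.} The two densities have to be compared through a Steiner–type estimate
\[
\ell(t)\le\ell_0+2\pi t\qquad(t\ge0),
\]
with equality at $t=0$. This is the place where simple connectedness of $\Omega$ enters: it guarantees that every outer neighbourhood $\Omega_t:=\{\operatorname{dist}(\cdot,\Omega)<t\}$ is again simply connected, so the Gauss--Bonnet theorem keeps the total curvature of $\partial\Omega_t$ equal to $2\pi$ and hence forces $\ell'(t)\le 2\pi$. I expect this to be the one genuinely technical point, because the level sets of $t(\cdot)$ need not be smooth and one has to deal with the cut locus of $\p\Omg$ and with the concentrated, necessarily nonpositive, curvature that the parallel curves acquire along it.

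\emph{Deformation and conclusion.} I would then interpolate linearly, $w_\theta:=(1-\theta)\,\ell+\theta\,2\pi(R+\cdot)$ for $\theta\in[0,1]$, and denote by $f_\theta$ the normalised ground state of the $w_\theta$–problem. The envelope theorem gives
\[
\frac{\D}{\D\theta}\,\mu(w_\theta)=\int_0^\infty\bigl((f_\theta')^2-\mu(w_\theta)\,f_\theta^2\bigr)\bigl(2\pi(R+t)-\ell(t)\bigr)\,\D t+\alpha\,f_\theta(0)^2\bigl(2\pi R-\ell_0\bigr).
\]
The boundary term vanishes since $2\pi R=\ell_0$; the integrand is nonnegative because $2\pi(R+t)-\ell(t)\ge0$ by the geometric step and — the crucial point — because $\mu(w_\theta)<0$ for all $\theta$ (this propagates along the path by a short continuity argument, given that $\mu(w_1)=\lambda_1^\alpha(\cBc)<0$), so that $(f_\theta')^2-\mu(w_\theta)f_\theta^2>0$. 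Hence $\theta\mapsto\mu(w_\theta)$ is non-decreasing, whence $\mu(\ell)\le\mu\bigl(2\pi(R+\cdot)\bigr)=\lambda_1^\alpha(\cBc)$, which is~(b). Part~(a) will then follow for free: writing $\rho_P:=|\p\Omg|/2\pi$ for the radius of the equal-perimeter disk, the isoperimetric inequality gives $\rho_P\ge(|\Omega|/\pi)^{1/2}$, and since $r\mapsto\lambda_1^\alpha(B_r^c)$ is non-increasing — a one-variable fact about Macdonald functions, to be verified directly — one gets $\lambda_1^\alpha(\Omgc)\le\lambda_1^\alpha(B_{\rho_P}^c)\le\lambda_1^\alpha(\cBc)$ with $\cB$ the equal-area disk, the first inequality being~(b).
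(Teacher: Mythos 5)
The theorem you were asked to prove is only quoted in this paper from [KL18, KL20]; no proof of it appears here, so the relevant comparison is with the arguments of those papers. Your plan is correct in substance and shares their two main ingredients: transplanting the radial ground state of the disk exterior onto $\Omgc$ via the distance function (equivalently, parallel coordinates), and the Steiner-type bound $\ell(t)\le \ell_0+2\pi t$ for the level sets of the distance, which is indeed where the topological hypothesis enters (in the cited work this lemma is obtained more cleanly through the convex hull: the $t$-neighbourhood of $\Omega$ is contained in the convex $t$-neighbourhood of $\mathrm{conv}\,\Omega$, whose perimeter is $|\partial(\mathrm{conv}\,\Omega)|+2\pi t\le \ell_0+2\pi t$, so the cut-locus analysis you worry about can be avoided); part (a) is then deduced exactly as you say from the isoperimetric inequality and the strict monotonicity of $R\mapsto\lambda_1^\alpha(\cBc)$, which is \cite[Proposition~3.2]{KL18}. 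Where you genuinely depart from the cited proofs is the final step: no deformation of the weight is needed. Taking the single test function $u=g\circ t$ with $g$ the disk profile, the boundary weights agree ($\ell_0=2\pi R$), the gradient term only increases when $\ell(t)$ is replaced by $2\pi(R+t)$, so the numerator is at most $\lambda_1^\alpha(\cBc)\int_0^\infty g^2\,2\pi(R+t)\,\D t$, and since $\lambda_1^\alpha(\cBc)<0$ and $\int_0^\infty g^2\ell\,\D t\le\int_0^\infty g^2\,2\pi(R+t)\,\D t$, this is at most $\lambda_1^\alpha(\cBc)\int_0^\infty g^2\ell\,\D t$; dividing by $\int_0^\infty g^2\ell\,\D t$ gives (b) in two lines. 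Your envelope-theorem interpolation reaches the same conclusion but at the cost of extra technicalities: you must check that $\mu(w_\theta)$ is a simple isolated eigenvalue for every $\theta$ (essential spectrum $[0,\infty)$ for the merely measurable weight $\ell$), that $\theta\mapsto\mu(w_\theta)$ is differentiable, and above all that $\mu(w_\theta)<0$ along the whole path — your ``short continuity argument'' as stated only yields negativity near $\theta=1$, and you need either a bootstrap (the set where $\mu<0$ is open, and there your monotonicity keeps $\mu$ below $\mu(w_1)<0$) or, simpler, the observation that $w_\theta\le 2\pi(R+\cdot)$ with equal boundary weight, so the disk profile itself makes the form negative for every $\theta$. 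With that patch your argument closes, but the direct one-test-function comparison is shorter and is essentially the route of [KL18, KL20]; the negativity of the eigenvalue, which you correctly identified as the crucial sign condition, is exactly what makes that direct comparison work.
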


Here the statements are valid for every real~$\alpha$,
but they are trivial for non-negative~$\alpha$
when $\lambda_1^\alpha(\Omgc) = 0$ represents the lowest
point in the essential spectrum of the Robin Laplacian. 

Property~(a) is an exterior variant of the Faber--Krahn inequality.    
More specifically, the latter states
$ 
  \lambda_1^{\rm D}(\Omega) \geq \lambda_1^{\rm D}(\cB)
$,
where the first Dirichlet eigenvalue $\lambda_1^{\rm D}(\Omega)$ 
is defined as in~\eqref{Rayleigh} but with trial functions~$u$
from $H_0^1(\Omega)$ and with the boundary term absent
(formally, $\alpha = \infty$).  
This celebrated inequality was conjectured 
by Lord Rayleigh in 1877 \cite{Rayleigh}
and proved independently 
by Faber in 1923~\cite{Faber} 
and Krahn in 1924~\cite{Krahn}.
The Faber--Krahn inequality extends to 
$ 
  \lambda_1^\alpha(\Omega) \geq \lambda_1^\alpha(\cB)
$
for all positive~$\alpha$, but this result is much more recent.
It was established by Bossel in 1986 \cite{Bossel_1986}
in the present planar setting
and by Daners in 2006 \cite{Daners_2006} for higher dimensions. 
We also refer to 
\cite{Bucur-Daners_2010,Bucur-Giacomini_2015,
Bucur-Ferone-Nitsch-Trombetti_2018,
Alvino-Nitsch-Trombetti,
Alvino-Chiacchio-Nitsch-Trombetti_2021,
Alvino-Chiacchio-Nitsch-Trombetti}
for alternative approaches and most recent developments.

The present setting of negative~$\alpha$ is even more recent,
and in fact mysterious.
In this case, the reverse inequality 
$ 
  \lambda_1^\alpha(\Omega) \leq \lambda_1^\alpha(\cB)
$
had been expected (Bareket's conjecture \cite{Bareket_1977} from 1977),
however, its global validity was disproved in \cite{FK7}.
We refer to \cite{Ferone-Nitsch-Trombetti_2015,AFK,Bucur-Ferone-Nitsch-Trombetti}
for latest results in this direction.
The spectral optimisation of the Robin Laplacian including 
Theorem~\ref{Thm.lambda1} is surveyed in the recent monograph
by Bandle and Wagner~\cite{BW}. 

Property~(b) is an exterior variant of a result due to Courant
from 1918 \cite{Courant_1918} for the first Dirichlet eigenvalue,
which actually preceded the Faber--Krahn inequality.
It turns out that the isoperimetric constraint is conceptually
easier than fixing the area of the domain.
As an illustration of this claim, note that 
$ 
  \lambda_1^\alpha(\Omega) \leq \lambda_1^\alpha(\cB)
$
for negative~$\alpha$, where~$\cB$ is the disk of the same perimeter as~$\Omega$,
was established in~\cite{AFK}.
On the other hand, the proof of Bareket's conjecture restated for 
simply connected planar domains still remains open.
It is remarkable that the exterior optimisation is easier 
in the sense that both the isoperimetric and isochoric problems 
for the first eigenvalue 
are fully settled in view of Theorem~\ref{Thm.lambda1}.

\subsection{The second eigenvalue}
The state of the art for the spectral optimisation 
of the first Robin eigenvalue in a bounded domain and its exterior
is our motivation to look at the second eigenvalue 
\begin{equation}\label{Rayleigh2}
  \lambda_2^\alpha(\Omgc)
  :=
  \inf_{
		\begin{smallmatrix}\cL\subset H^1(\Omgc)\\
	\dim\cL = 2\end{smallmatrix}}\sup_{u\in\cL\sm\{0\}}
  \frac{\displaystyle \int_{\Omgc}|\nb u|^2 
  +\alpha\int_{\p\Omg}|u|^2}{\displaystyle\int_{\Omgc}|u|^2}
  \,.
\end{equation}
More precisely, $\lambda_2^\alpha(\Omgc)$ represents 
the second eigenvalue of the Robin Laplacian in $L^2(\Omgc)$
only if~$\alpha$ is negative and~$|\alpha|$ is sufficiently large.
For small values of~$|\alpha|$, $\lambda_2^\alpha(\Omgc)=0$
is the lowest point in the essential spectrum. 
We expect the validity of the following conjecture.
\begin{conj}\label{Conj.lambda2}
If~$\Omega$ is simply connected, then
\begin{enumerate}
\item[\emph{(a)}]
$ 
  \lambda_2^\alpha(\Omgc) \leq \lambda_2^\alpha(\cBc)
$
where~$\cB$ is the disk of the same area as~$\Omega$;
 \item[\emph{(b)}]
$ 
  \lambda_2^\alpha(\Omgc) \leq \lambda_2^\alpha(\cBc)
$
where~$\cB$ is the disk of the same perimeter as~$\Omega$.
\end{enumerate}
\end{conj}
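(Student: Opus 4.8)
The plan is to establish both inequalities of Conjecture~\ref{Conj.lambda2} through the min--max characterisation~\eqref{Rayleigh2}: it suffices to exhibit, for each admissible~$\Omg$, a two-dimensional subspace $\cL\subset H^1(\Omgc)$ on which the Rayleigh quotient never exceeds $\lambda_2^\alpha(\cBc)$. The trial space will be built by transplanting onto~$\Omgc$ the eigenfunctions of the model exterior~$\cBc$ that realise $\lambda_2^\alpha(\cBc)$. A preliminary lemma must first identify the model: separation of variables on~$\cBc$ shows that, for negative~$\alpha$ with $|\alpha|$ in the regime where a second eigenvalue exists, the eigenfunctions below the essential spectrum are of the form $f_m(r)\cos(m\theta)$, $f_m(r)\sin(m\theta)$ with $f_m(r)=K_m(\kappa r)$ a modified Bessel profile, and one verifies by comparing the per-mode thresholds that $\lambda_2^\alpha(\cBc)=-\kappa^2$ is the \emph{doubly degenerate} first angular mode $m=1$. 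Write the two model eigenfunctions as $\Phi_i(x)=f(|x|)\,\omega_i(x/|x|)$, $i=1,2$, with $\omega_1=\cos\theta$, $\omega_2=\sin\theta$; by orthogonality of the angular sectors, each $\Phi_i$ is $L^2$-orthogonal to the radial first eigenfunction.

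For the transplantation I would use parallel coordinates anchored at~$\p\Omg$. Writing $t(x):=\mathrm{dist}(x,\p\Omg)$ on~$\Omgc$, one has $|\nb t|=1$ almost everywhere, and the coarea formula gives
\begin{equation*}
  \int_{\Omgc} G\bigl(t(x)\bigr)\,\D x = \int_0^\infty G(t)\,\ell(t)\,\D t,
  \qquad
  \ell(t):=\mathcal{H}^1\bigl(\{x\in\Omgc:t(x)=t\}\bigr),
\end{equation*}
with $\ell_{\cB}(t)=2\pi(R+t)$ for the disk of radius~$R$. Transplanting only the radial profile, $u(x)=f(t(x))$, reproduces Theorem~\ref{Thm.lambda1}: the gradient energy collapses to $\int_0^\infty|f'(t)|^2\ell(t)\,\D t$, and the constraint --- the isoperimetric inequality for~(a), the Steiner formula for~(b) --- lets one compare $\ell(t)$ with $\ell_{\cB}(t)$, the disk saturating both. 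The obstruction for $\lambda_2^\alpha$ is that the model eigenfunctions carry the angular factors $\cos\theta,\sin\theta$, so the trial functions must be \emph{modulated},
\begin{equation*}
  u_i(x)=f\bigl(t(x)\bigr)\,\Theta_i(x),
  \qquad i=1,2,
\end{equation*}
where $\Theta_i$ is a direction field on~$\Omgc$ reducing to $\cos\theta,\sin\theta$ when $\Omg=\cB$; a natural choice defines an angular variable from the normalised arc-length coordinate of the nearest boundary point, or from the Cartesian components of a normalised position field.

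Because $\lambda_2^\alpha(\cBc)$ is degenerate, bounding each $u_i$ separately does not bound the supremum of the quotient over $\mathrm{span}\{u_1,u_2\}$: that supremum is the larger root of the generalised eigenvalue problem for the $2\times 2$ form and Gram matrices, so the cross terms must be neutralised. Following the Szeg\H{o}--Weinberger scheme for the second Neumann eigenvalue, I would exploit the freedom in the base point of the coordinates together with an overall rotation of the frame $(\cos\theta,\sin\theta)$ to arrange that these two matrices are simultaneously diagonal with equal diagonal entries, whereupon the supremum over the span collapses to the larger of the two individual Rayleigh quotients; a Brouwer fixed-point argument supplies a base point for which the requisite integrals vanish. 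It then remains to bound each individual quotient by $\lambda_2^\alpha(\cBc)$, where monotonicity properties of the Bessel profile~$f$ should let one replace $\ell(t)$ by $\ell_{\cB}(t)$ in the favourable direction.

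The main obstacle is the \emph{angular gradient}. For the modulated trial functions,
\begin{equation*}
  |\nb u_i|^2
  = |f'(t)|^2\,\Theta_i^2
  + f(t)^2\,|\nb\Theta_i|^2
  + 2f(t)f'(t)\,\Theta_i\,\nb t\cdot\nb\Theta_i,
\end{equation*}
and, unlike the radial term, the angular energy $f(t)^2|\nb\Theta_i|^2$ is \emph{not} pointwise controlled by its disk value $f(t)^2/r^2$: on a non-circular boundary the parallel curves are not concentric, the angular coordinate is distorted, and $|\nb\Theta_i|$ can be large where the curvature of~$\p\Omg$ varies. Moreover, for non-convex~$\Omg$ the distance function has a cut locus, so $\ell(t)$ is no longer given by a Steiner formula and the level-set geometry itself becomes delicate. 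Reconciling the extra angular energy with the comparison $\ell(t)\le\ell_{\cB}(t)$ --- which is clean only in the convex case --- while simultaneously respecting the area (for~(a)) or perimeter (for~(b)) constraint and the decoupling conditions, is exactly where the argument currently breaks down; this is why only the convex and isoelastic versions are presently within reach. Closing this gap for arbitrary simply connected~$\Omg$ is the heart of Conjecture~\ref{Conj.lambda2}.
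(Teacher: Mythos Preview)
This statement is a \emph{conjecture} in the paper, not a theorem: the authors explicitly write that they ``have not been able to establish Conjecture~\ref{Conj.lambda2} under the stated isochoric and isoperimetric constraints'', and there is no proof to compare against. Your proposal is honest about this --- you correctly identify the angular-gradient term as the obstruction and conclude that only the convex isoelastic version is currently within reach --- so in that sense your assessment matches the paper's.

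It is still worth noting how your sketch relates to what the paper \emph{does} prove. For the isoelastic inequality (Theorem~\ref{thm2}) the authors do use parallel coordinates and modulated trial functions, essentially your scheme, but with two differences. First, the angular modulation is not an abstract direction field chosen via a Brouwer argument: they take $v_\star(s,t)=g(t)[\tau_1(s)+\ii\tau_2(s)]$, i.e.\ the components of the unit tangent at the nearest boundary point, and pair it with the radial $u_\star(s,t)=f(t)$. The Frenet formula then makes $u_\star$ and $v_\star$ orthogonal in $L^2$, in the gradient inner product, and on the boundary, \emph{automatically} --- no fixed-point step is needed. Second, the troublesome angular energy becomes $\int_0^L\!\int_0^\infty \frac{\kp^2(s)}{1+t\kp(s)}|g|^2\,\D t\,\D s$, and it is controlled not by a pointwise comparison with the disk but by Jensen's inequality applied to the concave map $x\mapsto x/(1+t\sqrt{x})$; this is precisely what forces the \emph{elastic-energy} constraint $\int\kp^2=2\pi/R$ rather than an area or perimeter constraint. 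For the monotonicity result (Theorem~\ref{thm1}) the paper takes a different route altogether: since $\Omgc\subset\cBc$, it simply restricts the disk eigenfunctions to $\Omgc$, and the central-symmetry hypothesis on $\Omg$ replaces your Brouwer step by forcing the ground state of $-\Delta_\alpha^{\Omgc}$ to be even, hence orthogonal to the odd transplants $K_1(\omega r)\cos\theta$, $K_1(\omega r)\sin\theta$.

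So your diagnosis of the gap is right, but the decoupling mechanism you propose (Szeg\H{o}--Weinberger base-point plus rotation) is not the one the paper uses for its partial results; their orthogonality comes for free from the tangent-vector modulation or from imposed symmetry, and the genuine remaining obstacle is exactly the one you name --- bounding the angular energy under an area or perimeter constraint rather than an elastic one.
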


Again the validity of the inequalities is trivial if~$\alpha$ is non-negative
or if it is negative but~$|\alpha|$ is small.  
Determining the critical (negative) value $\alpha_\star(\Omgc)$
for which $\lambda_2^\alpha(\Omgc)$
becomes a discrete eigenvalue emerging from the essential spectrum,
$$
 \alpha_{\star}(\Omgc) :=
  \sup_{\lambda_2^\alpha(\Omgc) < 0} \alpha
  \,,
$$ 
is part of the problem.
 
Property~(a) is an exterior variant of the Szeg\H{o}--Weinberger inequality
stating that 
$ 
  \lambda_2^{\rm N}(\Omega) \leq \lambda_2^{\rm N}(\cB)
$,
where the Neumann eigenvalues correspond to $\alpha=0$.
This maximal property of the disk was conjectured by 
Kornhauser and Stakgold in 1952~\cite{Kornhauser-Stakgold_1952}
and proved independently by Szeg\H{o} in 1954~\cite{Szego}
and Weinberger in 1956~\cite{Weinberger} by different method
(the latter extends to higher dimensions).
Extensions of the Szeg\H{o}--Weinberger inequality to Robin 
eigenvalues in bounded domains has been recently considered
by Freitas and Laugesen \cite{FL20,FL21}.

We have not been able to establish Conjecture~\ref{Conj.lambda2}
under the stated isochoric and isoperimetric constraints.
Instead, we naturally arrive at constraints due to the \emph{elastic energy}
\begin{equation}\label{elastic}
  E(\p\Omg) := \frac12\int_{\partial\Omega} \kp^2
  \,,
\end{equation}
where~$\kappa$ is the curvature of~$\partial\Omega$.
The main result of this paper reads as follows.

\begin{thm}\label{Thm.lambda2}
If~$\Omega$ is convex, then
\begin{enumerate}
\item[\emph{(i)}]
$ 
  \lambda_2^\alpha(\Omgc) \leq \lambda_2^\alpha(\cBc)
$
where~$\cB$ is the disk with $\p\cB$ of the same elastic energy as $\p\Omg$;
\item[\emph{(ii)}]
$
 \alpha_\star(\Omgc) \geq - \frac{1}{\pi} E(\p\Omg)
$ where the equality is attained for $\Omega$ being a disk.
\end{enumerate}
\end{thm}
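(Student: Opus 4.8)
The plan is to prove both statements via a single trial-function argument, exploiting the rotational symmetry of the exterior of the disk and comparing it with a carefully chosen two-dimensional test space on $\Omgc$. For part~(i), let $\cB$ be the disk whose boundary has the same elastic energy as $\partial\Omega$. On a disk of radius $R$, the curvature is constant, $\kp \equiv 1/R$, so $E(\p\cB) = \frac12 \cdot 2\pi R \cdot R^{-2} = \pi/R$; equating with $E(\p\Omg)$ fixes $R = \pi/E(\p\Omg)$. The eigenfunctions realising $\lambda_2^\alpha(\cBc)$ are of the form $f(r)\,e^{\pm i\theta}$ (the first angular harmonic), so the value $\lambda_2^\alpha(\cBc)$ equals the infimum in a one-dimensional radial Rayleigh quotient on $(R,\infty)$ with an angular-momentum term $r^{-2}$ and a Robin term at $r = R$. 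The idea is then to transplant the optimal radial profile $f$ onto $\Omgc$: parametrise $\partial\Omega$ by arc length, let $s$ be the signed distance to $\partial\Omega$ in $\Omgc$, and build two trial functions of the shape $f(\text{something})$ times the two "linear coordinate" functions on $\partial\Omega$ that mimic $\cos\theta, \sin\theta$. Because $\Omega$ is convex, the outward distance coordinate is globally well-defined and the Jacobian of the map $(s,\text{boundary point}) \mapsto \Omgc$ is $1 + s\kp \geq 1$, which is exactly the direction needed to control the gradient and volume integrals. The elastic-energy constraint enters when one integrates the curvature contributions coming from differentiating along $\partial\Omega$: after applying the isoperimetric-type inequality $L \geq 2\pi R$ (where $L = |\partial\Omega|$) together with $\int_{\partial\Omega}\kp^2 \geq L \cdot (2\pi/L)^2 = 4\pi^2/L$ by Cauchy--Schwarz, one sees that $E(\p\Omg) \geq 2\pi^2/L$, i.e. $L \geq 2\pi^2/E(\p\Omg) = 2\pi R$, reproducing the needed monotonicity after the comparison.

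Part~(ii) follows as a by-product of the variational estimate in~(i) combined with the characterisation of $\alpha_\star$. Recall $\alpha_\star(\Omgc) = \sup\{\alpha : \lambda_2^\alpha(\Omgc) < 0\}$, so it suffices to exhibit, for every $\alpha > -\tfrac1\pi E(\p\Omg)$, a two-dimensional subspace $\cL \subset H^1(\Omgc)$ on which the quadratic form in~\eqref{Rayleigh2} is negative; equivalently, to show that the second eigenvalue is still below the essential spectrum for such $\alpha$. First I would compute the exterior-disk case explicitly: using Bessel/modified-Bessel functions one solves the radial ODE on $(R,\infty)$ with the first angular harmonic and shows that $\lambda_2^\alpha(\cBc) < 0$ precisely when $\alpha < -1/R = -E(\p\Omg)/\pi$, so that $\alpha_\star(\cBc) = -1/R$ and the claimed equality case holds. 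Then, for a general convex $\Omega$, I would plug the $r^{-2}$-decay radial trial profile (with a cut-off at large distance) into the two transplanted test functions and, using $1 + s\kp \geq 1$ together with the same Cauchy--Schwarz/isoperimetric bookkeeping as above, verify that the quadratic form stays negative whenever $\alpha > -E(\p\Omg)/\pi$ — hence $\alpha_\star(\Omgc) \geq -E(\p\Omg)/\pi$.

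The main obstacle I anticipate is the transplantation step in part~(i): one must choose the two trial functions on $\partial\Omega$ so that (a) they are genuinely independent (so that they span a $2$-dimensional space and the sup over $\cL$ in~\eqref{Rayleigh2} is controlled), and (b) the cross term $\alpha \int_{\partial\Omega} |u|^2$ together with the tangential-derivative contribution to $\int_{\Omgc}|\nabla u|^2$ combine in a way that, after integrating the Jacobian $1 + s\kp$, is bounded by the corresponding disk quantities. Controlling the tangential-derivative term requires an identity relating $\int_{\partial\Omega} |\partial_\tau \phi|^2$ for the coordinate functions $\phi$ to the geometry, and it is here that $\int_{\partial\Omega}\kp^2$ appears naturally rather than the area or perimeter — which is the reason the theorem is stated with the elastic-energy constraint and not under the isochoric or isoperimetric ones of Conjecture~\ref{Conj.lambda2}. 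A secondary technical point is the removal of the far-field cut-off: since $\lambda_2^\alpha(\cBc)$ is attained by a genuine $L^2$-eigenfunction (exponentially decaying when $\alpha < -1/R$), its profile is already in $H^1$, but for $\alpha$ in the borderline regime one must argue by approximation that the strict inequality is preserved, which I would handle by a standard perturbation/limiting argument.
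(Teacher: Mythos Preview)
Your overall strategy---parallel coordinates on $\Omgc$, transplanting the optimal radial profile from $\cBc$, and invoking the perimeter bound $L\ge 2\pi R$ via Cauchy--Schwarz on the total curvature---matches the paper's. But two concrete points in your plan would not go through as written.

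\medskip
\noindent\textbf{Choice of the two trial functions.} You propose to take \emph{both} test functions of the form $g(t)\,\phi_i(s)$ with $\phi_1,\phi_2$ playing the role of $\cos\theta,\sin\theta$. The difficulty is that the min--max characterisation of $\lambda_2$ asks for the \emph{supremum} of the Rayleigh quotient over the two-dimensional span, whereas averaging the two numerators and denominators (which is what eliminates the angular dependence through $\phi_1^2+\phi_2^2=1$) only controls the \emph{minimum}. The cross terms you flag as ``the main obstacle'' are genuinely problematic: for the natural choice $\phi_i=\tau_i$ the two functions are neither $L^2$-orthogonal nor form-orthogonal in general, and there is no reason either of them is orthogonal to the unknown ground state $u_{1,\Omgc}^\alpha$. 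The paper sidesteps this completely by taking as the \emph{first} trial function the transplanted ground-state profile $u_\star(s,t)=f(t)$ (no angular factor) and as the second the complex combination $v_\star(s,t)=g(t)\bigl(\tau_1(s)+\ii\tau_2(s)\bigr)$. One then checks directly, using only $\int_0^L\dot\sigma\,\der s=0$ and the Frenet formula, that $u_\star$ and $v_\star$ are orthogonal in $L^2(\Omgc)$, that their gradients are orthogonal in $L^2(\Omgc;\dC^2)$, and that their traces are orthogonal in $L^2(\p\Omg)$. Hence the form is diagonal on ${\rm span}\{u_\star,v_\star\}$ and it suffices to bound each Rayleigh quotient separately; no information about the actual ground state of $-\Delta_\alpha^{\Omgc}$ is needed.

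\medskip
\noindent\textbf{The curvature term.} With $v_\star$ as above, the tangential part of $|\nb v_\star|^2$ produces the integral $\int_0^L\int_0^\infty \frac{\kp^2(s)}{1+t\kp(s)}\,|g(t)|^2\,\der t\,\der s$, and this is where the elastic energy enters. Your write-up suggests that Cauchy--Schwarz alone handles it; it does not. The paper applies Jensen's inequality with the strictly concave function $F_t(x)=\dfrac{x}{1+t\sqrt{x}}$ to obtain, for each fixed $t>0$,
\[
\frac{1}{L}\int_0^L\frac{\kp^2(s)}{1+t\kp(s)}\,\der s
=\frac{1}{L}\int_0^L F_t(\kp^2(s))\,\der s
< F_t\!\left(\frac{2E(\p\Omg)}{L}\right)
< \frac{1}{R}\cdot\frac{1}{R+t},
\]
the last step using $E(\p\Omg)=\pi/R$ together with $L>2\pi R$. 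Without this concavity argument the bound on $R[v_\star]$ does not close.

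\medskip
\noindent\textbf{Part (ii).} Your inequality is reversed: you want to exhibit a two-dimensional subspace with negative form for every $\alpha<-\tfrac{1}{\pi}E(\p\Omg)$, not $\alpha>-\tfrac{1}{\pi}E(\p\Omg)$. No separate trial function or cut-off argument is needed: once (i) is established, $\lambda_2^\alpha(\Omgc)\le\lambda_2^\alpha(\cBc)<0$ for all $\alpha<-1/R=-E(\p\Omg)/\pi$ gives (ii) immediately, with equality for the disk from the explicit computation $\alpha_\star(\cBc)=-1/R$.
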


Below (\cf~Theorem~\ref{thm2}),
we restate the theorem under the additional assumption that~$\Omega$
is not a disk congruent to~$\cB$, which implies the strong inequality
$ 
  \lambda_2^\alpha(\Omgc) < \lambda_2^\alpha(\cBc)
$
for all $\aa < -\frac{1}{R}$. 
In Theorem~\ref{Thm.lambda2},
the statement~(i) is of course trivial if $\alpha \geq \alpha_\star(\Omgc)$
(including the case of non-negative~$\alpha$). 
It remains an open question whether the equality in the inequality in Theorem~\ref{Thm.lambda2}\,(ii)
is attained only for disks.

Despite the fact that Theorem~\ref{Thm.lambda2} 
does not imply Conjecture~\ref{Conj.lambda2},
it can be considered as an analogy of the isoperimetric 
constraint of Courant's preceding the ultimate isochoric result 
of Faber and Krahn in the Dirichlet situation of bounded domains. 
Moreover, $E(\p\Omg)$ is a two-dimensional variant of the Willmore's energy,
which already appeared naturally for the spectral optimisation  
of $\lambda_1^\alpha(\Omgc)$ in three 
dimensions~\cite{KL20}.

Theorem~\ref{Thm.lambda2} substantially improves~\cite{EL22},
where the inequality as in Theorem~\ref{Thm.lambda2}\,(i) was obtained under 
a more restrictive pointwise condition on the curvature of~$\partial\Omega$
(\cf~Remark~\ref{Rem.EL22}).

\subsection{Monotonicity}
Another objective of this paper is to analyse monotonicity 
of the second eigenvalue $\lambda_2^\alpha(\Omgc)$ 
with respect to a domain inclusion.  
More specifically, it is well known that, 
in the Dirichlet case,
$\lambda_k^{\rm D}(\Omega_1) \leq \lambda_k^{\rm D}(\Omega_2)$
for every $k \geq 1$ whenever $\Omega_1 \supset \Omega_2$.
This property is generally false for other boundary conditions
(see, \eg, \cite[Section~1.3.2]{Henrot_2006}). 
However, it can be occasionally achieved under extra hypotheses 
(see~\cite{Pinsky} as an illustration for bounded domains
with combined Dirichlet and Neumann boundary conditions). Recently, the monotonicity of the eigenvalues with respect to inclusion
of convex domains for the Neumann Laplacian was established with a correction factor dependent only on the dimension~\cite{F22, FK23}.

In the present case of attractive Robin boundary conditions ($\alpha < 0$),
the first eigenvalue satisfies
the reverse monotonicity $\lm_1^\alpha(\Omgc) \le \lm_1^\alpha(\cBc)$ 
provided that $\Omg\supset\cB$. 
This is a simple consequence of the isoperimetric inequality 
in Theorem~\ref{Thm.lambda1}\,(i) combined with~\cite[Proposition~3.2]{KL18}.
This result can be viewed as an exterior analogue of~\cite[Theorem~1]{GS05}.
 
In this paper, we address the question whether 
$\lm_2^\alpha(\Omgc) \le \lm_2^\alpha(\cBc)$ for any $\alpha < 0$ 
under the same inclusion assumption $\Omg\supset\cB$. 
We obtain an affirmative answer under additional geometric assumptions on~$\Omg$.
\begin{thm}\label{Thm.mono}
    If~$\Omega$ is strictly star-shaped and centrally symmetric,
	$\cB$ is a disk of radius~$R$ centred at the origin
	and $\Omg\supset\cB$, then
	\begin{enumerate} 
	\item[\emph{(i)}]
	$\lambda_2^\alpha(\Omgc) \leq \lambda_2^\alpha(\cBc)$;
	\item[\emph{(ii)}] 
	$\alpha_\star(\Omgc) \ge -\frac{1}{R}$.
	\end{enumerate}
\end{thm}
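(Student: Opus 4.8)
The plan is to exploit the central symmetry together with the star-shapedness to construct a two-dimensional trial space in the variational characterisation \eqref{Rayleigh2} of $\lambda_2^\alpha(\Omgc)$ from known objects on the disk exterior $\cBc$. Recall that $\lambda_2^\alpha(\cBc)$ is an eigenvalue of the Robin Laplacian on $\cBc$ with an eigenfunction of angular momentum one; in polar coordinates $(r,\theta)$ it has the form $f(r)\cos\theta$ and $f(r)\sin\theta$, spanning a two-dimensional eigenspace $\cL_\cB$, and the associated Rayleigh quotient equals $\lambda_2^\alpha(\cBc)$ on all of $\cL_\cB$. Since $\Omega\supset\cB$ and $\Omega$ is strictly star-shaped with respect to the origin, its boundary is a graph $r=\rho(\theta)$ with $\rho(\theta)\ge R$, and central symmetry means $\rho(\theta+\pi)=\rho(\theta)$. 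The idea is to transplant the pair $\{f(r)\cos\theta, f(r)\sin\theta\}$ from $\cBc$ to $\Omgc$ via a radial diffeomorphism that maps $\p\Omg$ to $\p\cB$, obtaining trial functions $u_1,u_2$ on $\Omgc$, and then to estimate that the Rayleigh quotient on $\mathrm{span}\{u_1,u_2\}$ does not exceed $\lambda_2^\alpha(\cBc)$.

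First I would set up the change of variables precisely: let $\Phi:\Omgc\to\cBc$ be of the form $\Phi(r,\theta)=(\psi_\theta(r),\theta)$ with $\psi_\theta(\rho(\theta))=R$ and $\psi_\theta(r)=r$ for $r$ large, and pull back the disk eigenfunctions to define $u_j:=(\text{disk eigenfunction})_j\circ\Phi$. The boundary integral $\int_{\p\Omg}|u_j|^2$ becomes an integral over $\p\cB$ against a Jacobian factor, and the gradient term transforms with the usual metric factors. The key structural fact I would use is that for the radial eigenfunction $f$ on $\cBc$, the function $f$ is monotone (decaying) in $r$ and the eigenvalue equation gives good control on $f$ and $f'$; combined with $\rho(\theta)\ge R$, one should be able to show that each of the transplanted boundary and Dirichlet terms is controlled by the corresponding quantity on $\cBc$. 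A cleaner route, avoiding an explicit diffeomorphism, is to use directly the trial functions $u_j(r,\theta)=g_j(\theta)\,\widetilde f(r)$ where $\widetilde f$ extends $f$ suitably and to separate variables: the angular parts $\cos\theta,\sin\theta$ are eigenfunctions of $-\partial_\theta^2$ on the circle with eigenvalue $1$, and central symmetry guarantees they remain orthogonal in $L^2(\p\Omg,\D s)$ and in $L^2(\Omgc)$ even with the $\theta$-dependent radial cutoff, because all weight functions are $\pi$-periodic. This orthogonality is what makes the pair genuinely two-dimensional after transplantation and lets us reduce the $\sup$ over the span to a pointwise comparison of two one-dimensional Rayleigh quotients.

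The main obstacle I anticipate is the gradient term: under the radial diffeomorphism, $|\nabla u_j|^2$ picks up the factor $1/(\partial_r\psi_\theta)$ from the radial derivative and $(\partial_r\psi_\theta)$ together with cross terms involving $\partial_\theta\psi_\theta$ from the angular derivative, and it is not automatic that the total integrates to something $\le\lambda_2^\alpha(\cBc)\int|u_j|^2$. Here is where strict star-shapedness must be used quantitatively — presumably to bound $|\partial_\theta\rho|/\rho$ and to ensure $\partial_r\psi_\theta>0$ — and where central symmetry is used to kill the cross terms between the $\cos\theta$ and $\sin\theta$ sectors when one takes the supremum over the span. I would first treat the case where $\Phi$ is the "linear" radial map $\psi_\theta(r)=R + (r-\rho(\theta))\cdot(\text{const})$ on a compact annulus and the identity outside, reducing everything to estimates on a bounded region, and handle the decay at infinity separately (it is harmless since $\Phi=\mathrm{id}$ there). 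Once (i) is established, part (ii) is immediate: if $\lambda_2^\alpha(\cBc)<0$ then by (i) $\lambda_2^\alpha(\Omgc)<0$ as well, so the critical coupling satisfies $\alpha_\star(\Omgc)\ge\alpha_\star(\cBc)$, and a direct separation-of-variables computation on $\cBc$ — reducing to a one-dimensional Bessel-type problem on $(R,\infty)$ with Robin condition at $R$ in the angular-momentum-one channel — yields $\alpha_\star(\cBc)=-\tfrac1R$.
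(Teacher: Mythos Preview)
Your proposal has a genuine gap in the use of central symmetry, and it misses the decisive simplification the paper exploits.

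First, your orthogonality claim is false: $\pi$-periodicity of a weight $w$ does \emph{not} force $\int_0^{2\pi}\cos\theta\sin\theta\,w(\theta)\,\der\theta=0$ (take $w(\theta)=1+\sin 2\theta$). Hence $K_1(\omega r)\cos\theta$ and $K_1(\omega r)\sin\theta$ restricted to $\Omgc$ need not be $L^2(\Omgc)$-orthogonal, nor do the sesquilinear cross-terms of $\frh_\alpha^{\Omgc}$ vanish. The actual role of central symmetry in the paper is different: it forces the \emph{ground state} $u_{1,\Omgc}^\alpha$ of $-\Delta_\alpha^{\Omgc}$ to be even under $x\mapsto -x$ (Proposition~\ref{prop:symm}), and since $K_1(\omega r)\cos\theta$ and $K_1(\omega r)\sin\theta$ are odd under this map, each is orthogonal to $u_{1,\Omgc}^\alpha$ in $L^2(\Omgc)$. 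The paper then invokes the \emph{second} characterisation in~\eqref{eq:minmax} with a \emph{single} trial function orthogonal to the ground state, not a two-dimensional trial space.

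Second, no transplantation or diffeomorphism is needed, and introducing one is counterproductive. Because $\Omgc\subset\cBc$, one can simply \emph{restrict} a disk eigenfunction $u_\star$ to $\Omgc$; it still satisfies $-\Delta u_\star=\lambda_2^\alpha(\cBc)\,u_\star$ pointwise there, so integration by parts gives
\[
\frac{\frh_\alpha^{\Omgc}[u_\star]}{\|u_\star\|_{L^2(\Omgc)}^2}
=\lambda_2^\alpha(\cBc)
+\frac{\displaystyle\int_{\partial\Omega} u_\star\Bigl(-\tfrac{\partial u_\star}{\partial\nu}+\alpha u_\star\Bigr)\,\der\sigma}{\displaystyle\int_{\Omgc}|u_\star|^2\,\der x},
\]
and the whole problem reduces to the sign of a boundary integral on $\partial\Omega$. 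This is where the star-shaped parametrisation $r=\rho(\theta)$ and the inclusion $\rho(\theta)\ge R$ enter, via the fact that $g(r):=-\omega K_1'(\omega r)+\alpha K_1(\omega r)$ vanishes at $r=R$ and is negative for $r>R$. A diffeomorphism would destroy the eigenfunction property and replace this clean reduction by precisely the gradient-control difficulty you flag as the ``main obstacle''. Moreover, the boundary computation produces a cross-term $\gamma_\Omega=\int_0^{2\pi}\frac{\dot\rho}{\rho}K_1(\omega\rho)^2\sin\theta\cos\theta\,\der\theta$ of indeterminate sign; the paper selects $u_\star$ to be the $\cos\theta$- or the $\sin\theta$-eigenfunction according to the sign of $\gamma_\Omega$, so that this term contributes $-|\gamma_\Omega|\le 0$. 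In particular the full two-dimensional span need not lie below $\lambda_2^\alpha(\cBc)$, so your inf--sup over a two-dimensional subspace would not go through without this selection step. Part~(ii) then follows exactly as you indicate.
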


Below (\cf~Theorem~\ref{thm1}),
we restate the theorem under the additional assumption $\Omg\not=\cB$,
which implies the strict inequality $\lambda_2^\alpha(\Omgc) < \lambda_2^\alpha(\cBc)$ 
for all $\alpha < -\frac{1}{R}$.
In Theorem~\ref{Thm.mono},
the statement~(i) is of course trivial if $\alpha \geq \alpha_\star(\Omgc)$
(including the case of non-negative~$\alpha$). 
We leave as an open problem 
whether the hypotheses about~$\Omega$ 
in Theorem~\ref{Thm.mono} are necessary 
to have the monotonicity.

We also emphasise that the analogous monotonicity 
of the second eigenvalue for bounded domains
remains unresolved in the full generality. 
However, under certain restrictions on the value of~$\alpha$ in terms of~$R$,
the desired monotonicity for bounded domains 
follows from the isoperimetric inequality proved in~\cite[Theorem~A]{FL21}.

\subsection{Structure of the paper}
In Section~\ref{Sec.pre}, we collect basic results about the Robin Laplacian
in exterior domains. 
In particular, we establish its spectral properties 
in the exterior of disks  
(\cf~Proposition~\ref{prop:disk}).
The monotonicity properties are studied 
and Theorem~\ref{Thm.mono} established in Section~\ref{Sec.mono}.
Finally, in Section~\ref{Sec.optim}, we perform the spectral optimisation
under the isoelastic constraint and prove Theorem~\ref{Thm.lambda2}.

\section{Preliminaries}\label{Sec.pre}
\subsection{The Robin Laplacian on a planar exterior domain}
In this subsection we will introduce the Robin Laplacian on a planar exterior domain and recall its basic spectral properties. Particular attention will be paid to the Robin Laplacian on the exterior of a centrally symmetric bounded domain.
For the Robin problem in bounded domains, 
see \cite[Sec.~4]{Henrot_2017}.

Let $\Omg\subset\dR^2$ be a bounded simply-connected domain with $C^\infty$-smooth boundary $\p\Omg$. We abbreviate by $\nu$ the outer unit normal vector to $\p\Omg$ and by $\der\s$ the one-dimensional Hausdorff measure on $\p\Omg$.
We denote $\Omgc := \dR^2\setminus\ov\Omg$ the open complement of $\Omg$. The exterior domain $\Omgc\subset\dR^2$ is connected and unbounded and has a compact boundary, which coincides with $\p\Omg$. Let the boundary parameter $\alpha\in\dR$ be fixed. The $L^2$-based standard Sobolev space on $\Omgc$ of order $k\in\dN$ will be denoted by $H^k(\Omgc)$.
According to~\cite[Section 2]{KL18} the symmetric, densely defined quadratic form
\begin{equation}\label{eq:form}
	\frh^{\Omgc}_\alpha[u] := \int_{\Omgc}|\nb u|^2\dd x +\alpha\int_{\p\Omg}|u|^2\dd \s,\qquad \dom\frh^{\Omgc}_\alpha := H^1(\Omgc),
\end{equation}
is closed and lower-semibounded in the Hilbert space $L^2(\Omgc)$. By the first representation theorem~\cite[Theorem VI 2.1]{K}, a unique self-adjoint operator $-\Delta^{\Omgc}_\alpha$ in the Hilbert space $L^2(\Omgc)$ is associated to the quadratic form $\frh^{\Omgc}_\alpha$. The operator $-\Delta^{\Omgc}_\alpha$ is called the \emph{Robin Laplacian} on $\Omgc$.
It follows \eg~from~\cite[Proposition 3.1 and Theorem 3.5]{BLLR17} that the Robin Laplacian on the exterior domain $\Omgc$ is characterised by
\begin{equation}\label{key}
	-\Delta_\alpha^{\Omgc} u = -\Delta u,
	\qquad \dom(-\Delta_\alpha^{\Omgc})
	= \big\{u\in H^2(\Omgc)\colon \p_\nu u|_{\p\Omg} = \alpha u|_{\p\Omg}\big\},  
\end{equation}
where $u|_{\p\Omg}\in H^{3/2}(\p\Omg)$ is the trace of $u$ on the boundary and where $\p_\nu u|_{\p\Omg}\in H^{1/2}(\p\Omg)$ is the normal derivative of $u$ on the boundary with the normal pointing outwards of $\Omg$ (inwards of $\Omgc$, respectively).  

In the next proposition we collect basic spectral properties of $-\Delta_\alpha^{\Omgc}$.
\begin{prop}\cite[Propositions 1 and 2]{KL18}
	Let the self-adjoint operator $-\Delta_\alpha^{\Omgc}$ in $L^2(\Omgc)$ be associated with the quadratic form~\eqref{eq:form}. Then the following hold.
	\begin{myenum}
	\item The essential spectrum of $-\Delta_\alpha^{\Omgc}$ coincides with $[0,\infty)$.
	\item The negative discrete spectrum of $-\Delta_\alpha^{\Omgc}$ is non-empty for any $\alpha <0$.
	\end{myenum}
\end{prop}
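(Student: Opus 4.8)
The plan is to prove the two assertions separately: first pin down the essential spectrum as $[0,\infty)$ by establishing two opposite inclusions, and then exhibit a single trial function that forces the bottom of the spectrum below the essential threshold. Throughout I use only that $\p\Omg$ is compact, so that $-\Delta_\alpha^{\Omgc}$ differs from the free Laplacian only through a boundary term supported on a compact set.

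For the identity $\sigma_{\mathrm{ess}}(-\Delta_\alpha^{\Omgc})=[0,\infty)$ I would argue as follows. To show $[0,\infty)\subseteq\sigma_{\mathrm{ess}}(-\Delta_\alpha^{\Omgc})$, fix $\lambda=|k|^2\ge0$ and build a singular Weyl sequence living far from the boundary. Pick a fixed $g\in C_c^\infty(\dR^2)$ with $\|g\|_{L^2}=1$, set $g_n(x):=n^{-1}g(x/n)$, so that $\|g_n\|_{L^2}=1$ while $\|\nb g_n\|_{L^2}=O(n^{-1})$ and $\|\Delta g_n\|_{L^2}=O(n^{-2})$, and put $u_n(x):=g_n(x-y_n)\,e^{\mathrm{i} k\cdot x}$, where $|y_n|$ grows fast enough that $\operatorname{supp}u_n$ stays inside $\Omgc$ and escapes to infinity. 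Since $u_n$ vanishes near $\p\Omg$, the boundary term is inactive and a direct computation gives $(-\Delta_\alpha^{\Omgc}-\lambda)u_n=-\big(\Delta g_n+2\mathrm{i}\,k\cdot\nb g_n\big)(x-y_n)\,e^{\mathrm{i} k\cdot x}\to0$ in $L^2(\Omgc)$, while $u_n\rightharpoonup0$; by the Weyl criterion $\lambda\in\sigma_{\mathrm{ess}}$. For the reverse inclusion $\sigma_{\mathrm{ess}}(-\Delta_\alpha^{\Omgc})\subseteq[0,\infty)$ I would use Neumann bracketing: choosing $R$ with $\ov\Omg\subset B_R$ and imposing an extra Neumann condition on $\p B_R$ decouples the form into a piece on the bounded collar $\Omgc\cap B_R$ and the Neumann Laplacian on $\dR^2\sm\ov{B_R}$. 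The collar operator has compact resolvent, hence empty essential spectrum, while the exterior Neumann operator is manifestly nonnegative; as this decoupled operator lies below $-\Delta_\alpha^{\Omgc}$ in the form sense, min-max yields $\inf\sigma_{\mathrm{ess}}(-\Delta_\alpha^{\Omgc})\ge0$. (Equivalently one may invoke Persson's characterisation of $\inf\sigma_{\mathrm{ess}}$, noting that any function supported outside a large ball has nonnegative energy once the boundary term drops out.)

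For the non-emptiness of the negative discrete spectrum, since $\sigma_{\mathrm{ess}}=[0,\infty)$ it suffices, by the variational principle, to produce $u\in H^1(\Omgc)$ with $\frh^{\Omgc}_\alpha[u]<0$; here the planar geometry is decisive. Fix $R$ with $\ov\Omg\subset B_R$ and, for $\rho>R$, define $u_\rho$ to equal $1$ on $\Omgc\cap B_R$, to equal the logarithmic profile $\ln(\rho/|x|)/\ln(\rho/R)$ for $R\le|x|\le\rho$, and to vanish for $|x|\ge\rho$. Then $u_\rho\in H^1(\Omgc)$, its trace on $\p\Omg$ is identically $1$, so the boundary term equals $\alpha\,|\p\Omg|<0$ independently of $\rho$ (with $|\p\Omg|$ the length of $\p\Omg$), whereas a direct computation in polar coordinates gives
\[
  \int_{\Omgc}|\nb u_\rho|^2\,\dd x=\frac{2\pi}{\ln(\rho/R)}\xrightarrow[\rho\to\infty]{}0 .
\]
Hence $\frh^{\Omgc}_\alpha[u_\rho]\to\alpha\,|\p\Omg|<0$, so for $\rho$ large enough $\frh^{\Omgc}_\alpha[u_\rho]<0$; the bottom of the spectrum is therefore strictly negative and, lying below $\inf\sigma_{\mathrm{ess}}=0$, is a discrete eigenvalue. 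This construction works for every $\alpha<0$, no matter how small $|\alpha|$ is.

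The essential-spectrum identity is routine; the only technical care is the rescaling $g_n(x)=n^{-1}g(x/n)$ that makes the Weyl-sequence commutator vanish, together with the elementary facts that the collar operator has compact resolvent and the exterior Neumann operator is nonnegative. The conceptual heart — and the step I would flag — is the logarithmic construction in the second part: it exploits the vanishing, as $\rho\to\infty$, of the Dirichlet energy of the harmonic profile on the outer ring, i.e. the fact that a compact set carries zero logarithmic capacity obstruction in the plane. This is precisely why \emph{every} negative $\alpha$ produces a bound state in two dimensions. In dimension $d\ge3$ the analogous harmonic profile $|x|^{2-d}$ has finite Dirichlet energy, the energy no longer tends to $0$, and a negative eigenvalue appears only once $|\alpha|$ exceeds a positive threshold; so the unrestricted range $\alpha<0$ is a genuinely planar phenomenon.
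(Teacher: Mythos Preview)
Your argument is correct. The paper does not give its own proof of this proposition; it simply records the statement and cites \cite[Propositions~1 and~2]{KL18}. What you have written is essentially the standard proof one finds there: Weyl sequences localised at infinity together with Neumann bracketing (or, equivalently, a Persson-type argument) to pin down $\sigma_{\mathrm{ess}}=[0,\infty)$, and the logarithmic cut-off trial function---constant near $\partial\Omega$, harmonic on a large outer ring---to produce $\frh_\alpha^{\Omgc}[u_\rho]\to\alpha\,|\partial\Omega|<0$, exploiting the vanishing of the planar Dirichlet capacity. Your closing remark on the dimensional dichotomy is also accurate and worth keeping: the same construction fails for $d\ge 3$, which is why item~(ii) is genuinely two-dimensional.
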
 
In the following, we will always assume that $\alpha <0$. According to~\cite[Theorem 6.9]{B62} the negative discrete spectrum of $-\Delta_\alpha^{\Omgc}$ is finite for any $\alpha < 0$ and we denote by $N_\alpha(\Omgc) \in\dN$ the dimension of the negative spectral subspace of $-\Delta_\alpha^{\Omgc}$.
We denote by $\{\lm_k^\alpha(\Omgc)\}_{k=1}^{N_\alpha(\Omgc)}$ the negative eigenvalues of $-\Delta_\alpha^{\Omgc}$ enumerated in the non-decreasing order and repeated with multiplicities taken into account. For the sake of convenience, we extend the sequence of negative eigenvalues and set $\lm_{k+N_\alpha(\Omgc)}^\alpha(\Omgc) := 0$ for all $k\in\dN$. 
The extended sequence of the eigenvalues of $-\Delta_\alpha^{\Omgc}$ can be characterised by the min-max principle~\cite[\S 4.5]{D95} (see also~\cite[Theorem 1.27 and 1.28]{FLW23})
\begin{equation}\label{eq:minmax}
\begin{aligned}
	\lm_k^\alpha(\Omgc) &\!=\! \inf_{
		\begin{smallmatrix}\cL\subset H^1(\Omgc)\\
	\dim\cL = k\end{smallmatrix}}\sup_{u\in\cL\sm\{0\}}
\frac{\displaystyle \int_{\Omgc}|\nb u|^2\dd x +\alpha\int_{\p\Omg}|u|^2\dd\s}{\displaystyle\int_{\Omgc}|u|^2\dd x}\\
   &\!=\! \sup_{
	u_1,\dots,u_{k-1}\in L^2(\Omgc)}\inf_{\begin{smallmatrix}u\in
	H^1(\Omgc)\cap\{u_1,\dots,u_{k-1}\}^\bot\\
	u\ne 0\end{smallmatrix}}\!\!
\frac{\displaystyle \int_{\Omgc}|\nb u|^2\dd x +\alpha\int_{\p\Omg}|u|^2\dd\s}{\displaystyle\int_{\Omgc}|u|^2\dd x},\qquad k\in\dN;
\end{aligned}
\end{equation}
here, the infimum in the first characterisation
of the extended sequence of eigenvalues of $-\Delta_\aa^{\Omgc}$ is taken over $k$ dimensional linear subspaces of $H^1(\Omgc)$;
the supremum in the second characterisation
in the case that $k\le N_\aa(\Omgc)$ is attained when  $u_1,u_2,\dots,u_{k-1}$ are a family of orthonormal eigenfunctions of $-\Delta_\alpha^{\Omgc}$ corresponding to the eigenvalues $\lm_1^\alpha(\Omgc), \lm_2^\alpha(\Omgc),\dots,\lm_{k-1}^\alpha(\Omgc)$ and the infimum is then attained when $u$ is an eigenfunction of $-\Delta_\aa^{\Omgc}$ corresponding to the eigenvalue $\lm_k^\aa(\Omgc)$.

By the min-max principle~\eqref{eq:minmax} the function $N_\alpha(\Omgc)$ is non-increasing in $\alpha$ on the whole interval $(-\infty,0)$ and it follows from~\cite[Corollary 1.4]{PP16} that
\[
	N_\alpha(\Omgc)\arr \infty,\qquad\text{as}\,\,\alpha\arr-\infty.
\]
\begin{dfn}
The constant $\alpha_\star(\Omgc)\le 0$ such that $N_\alpha(\Omgc)\ge 2$ if, and only if, $\alpha < \alpha_\star(\Omgc)$, is called \emph{the critical coupling constant}.
\end{dfn}
We point out that $\alpha_\star(\Omgc) = 0$ can not occur. In order to see this, let us choose a disk $\cB\subset\dR^2$ so that the inclusion $\ov\Omg\subset\cB$ holds. 
Let us introduce the domain $\Omg' :=\cB\sm\ov\Omg$ and consider the
self-adjoint operator $-\Delta_\aa^{\Omg'}$ in $L^2(\Omg')$ associated with the closed, densely defined, symmetric, and lower-semibounded quadratic form
\[
	H^1(\Omg')\ni u\mapsto \int_{\Omg'}|\nb u|^2\dd x+\aa\int_{\p\Omg}|u|^2\dd \s.
\]
The operator $-\Delta_0^{\Omg'}$ (with $\aa = 0$) is non-negative and its lowest simple eigenvalue is equal to zero. It follows from the trace theorem~\cite[Theorem 3.37]{McL} combined with~\cite[Theorem VI 3.4]{K}
that the operators $-\Delta_\aa^{\Omg'}$ converge in the norm resolvent sense to $-\Delta_0^{\Omg'}$ as $\aa\arr0$. Thus, we get from~\cite[Satz 9.24]{W0} that the dimension of the negative spectral subspace for $-\Delta_\aa^{\Omg'}$ is equal to one for all $\aa < 0$ with  sufficiently small $|\aa|$. 
As it follows from the min-max principle that the dimension of the negative spectral subspace for $-\Delta_\aa^{\Omg'}$ is not smaller than $N_\aa(\Omgc)$, we get that $N_\aa(\Omgc) = 1$ for all $\aa <0$ with sufficiently small absolute value.

By standard methods (see~\cite[Theorem 8.38]{GT}) it can be shown that the lowest eigenvalue of $-\Delta_\alpha^{\Omgc}$ is simple and that the respective eigenfunction $u_{1,\Omgc}^\alpha$ can be selected to be real-valued and positive in $\Omgc$ (see also the discussion in~\cite[Section 2]{KL18}). 

Recall that the domain $\Omg$ is said to be \emph{centrally symmetric} if it coincides with itself upon rotation by the angle equal to $\pi$ with respect to the origin.
Let us also introduce the mapping
\begin{equation}\label{eq:J}
	\sfJ\colon\dR^2\arr\dR^2,\qquad\sfJ x :=-x.
\end{equation}  
The central symmetry of the domain $\Omg$ is equivalent to the invariance $\sfJ(\Omg) = \Omg$.
Next, we will state a proposition on the
eigenfunction  corresponding to the lowest eigenvalue of $-\Delta^{\Omgc}_\alpha$ for centrally symmetric $\Omg$. 
\begin{prop}\label{prop:symm}
	Assume that the simply-connected  $C^\infty$-smooth domain $\Omg\subset\dR^2$
	is centrally symmetric. Let the mapping $\sfJ$ be as in~\eqref{eq:J}.
	Let $\alpha < 0$ be fixed and the self-adjoint operator $-\Delta_\alpha^{\Omgc}$ in $L^2(\Omgc)$ be associated with the quadratic form~\eqref{eq:form}. Then the eigenfunction $u_{1,\Omgc}^\alpha$ corresponding to the lowest eigenvalue of $-\Delta_\aa^{\Omgc}$ 
	satisfies 
	\begin{equation}\label{eq:gs_symmetry}
		u_{1,\Omgc}^\alpha(x) = u_{1,\Omgc}^\alpha(\sfJ x),\qquad \text{for all}\,\,\, x\in\Omgc.
	\end{equation}
\end{prop}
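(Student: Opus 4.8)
The plan is to combine the simplicity and positivity of the ground state with the symmetry hypothesis $\sfJ(\Omg)=\Omg$. First I would introduce the operator $U$ on $L^2(\Omgc)$ defined by $(Uf)(x):=f(\sfJ x)$. Since $\sfJ$ is a linear isometry of $\dR^2$ preserving $\Omg$ (equivalently $\sfJ(\Omgc)=\Omgc$ and $\sfJ(\p\Omg)=\p\Omg$), the change of variables $x\mapsto\sfJ x$ shows at once that $U$ is unitary on $L^2(\Omgc)$, that it maps $H^1(\Omgc)$ onto itself, and that it leaves the form~\eqref{eq:form} invariant: indeed $|\nb(Uf)(x)|=|\nb f(\sfJ x)|$ because the differential of $\sfJ$ is orthogonal, and $\sfJ$ restricts to an arclength-preserving bijection of $\p\Omg$ onto itself, so $\frh^{\Omgc}_\alpha[Uf]=\frh^{\Omgc}_\alpha[f]$ for all $f\in H^1(\Omgc)$. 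By the first representation theorem this invariance means precisely that $U$ commutes with the self-adjoint operator $-\Delta_\alpha^{\Omgc}$.

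Next, let $u_{1,\Omgc}^\alpha$ be the eigenfunction associated with the lowest eigenvalue $\lambda_1^\alpha(\Omgc)$ of $-\Delta_\alpha^{\Omgc}$, chosen real-valued and strictly positive on $\Omgc$ as recalled above. Since $U$ commutes with $-\Delta_\alpha^{\Omgc}$, the function $Uu_{1,\Omgc}^\alpha$ is again an eigenfunction for the eigenvalue $\lambda_1^\alpha(\Omgc)$. As this eigenvalue is simple, there is a scalar $c$ with $Uu_{1,\Omgc}^\alpha=c\,u_{1,\Omgc}^\alpha$; and since $\sfJ^2=\mathrm{id}$ we have $U^2=I$, hence $c^2=1$ and $c\in\{-1,1\}$. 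Finally, $Uu_{1,\Omgc}^\alpha$ is, like $u_{1,\Omgc}^\alpha$ itself, real-valued and strictly positive on $\Omgc$, which rules out $c=-1$ and forces $c=1$. Therefore $u_{1,\Omgc}^\alpha(\sfJ x)=(Uu_{1,\Omgc}^\alpha)(x)=u_{1,\Omgc}^\alpha(x)$ for every $x\in\Omgc$, which is exactly~\eqref{eq:gs_symmetry}.

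This is a standard symmetrisation argument and I do not anticipate any genuine obstacle; the single point deserving care is the commutation of $U$ with $-\Delta_\alpha^{\Omgc}$, and it is cleanest to obtain it from the invariance of the quadratic form as above rather than from the operator-domain description~\eqref{key}. Should one nonetheless prefer to argue on the domain, one uses that a Euclidean isometry commutes with the Laplacian and carries the outward unit normal $\nu$ at a point $y\in\p\Omg$ to the outward unit normal at $\sfJ y$ --- in the plane $\sfJ$ is the rotation by the angle $\pi$, hence orientation-preserving, so it does map outward normals to outward normals --- whence $\p_\nu(Uu)|_{\p\Omg}=(\p_\nu u)|_{\p\Omg}\circ\sfJ$ together with $(Uu)|_{\p\Omg}=u|_{\p\Omg}\circ\sfJ$ shows that the Robin condition $\p_\nu u|_{\p\Omg}=\alpha u|_{\p\Omg}$ is preserved by $U$.
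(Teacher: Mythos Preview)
Your proof is correct and follows essentially the same route as the paper's: define $v(x)=u_{1,\Omgc}^\alpha(\sfJ x)$, use the isometric invariance under $\sfJ$ to see that $v$ is again a ground-state eigenfunction, and then combine simplicity with positivity to conclude $v=u_{1,\Omgc}^\alpha$. The only cosmetic difference is that you phrase the invariance as ``$U$ commutes with $-\Delta_\alpha^{\Omgc}$'' via the form, whereas the paper checks directly that the Rayleigh quotient of $v$ equals $\lambda_1^\alpha(\Omgc)$ and invokes the variational characterisation of eigenfunctions; and you obtain $c=\pm1$ from $U^2=I$, while the paper gets it from $\|v\|=\|u_{1,\Omgc}^\alpha\|$.
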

\begin{proof}
	We introduce the function $v\colon\Omgc\arr\dR$ by
	\[
		v(x) := u_{1,\Omgc}^\aa(\sfJ x),\qquad x\in\Omgc.
	\]
	In view of central symmetry of $\Omg$ the function $v$ is well defined, because for any $x\in\Omgc$ one has $\sfJ x\in\Omgc$.
	It is straightforward to see 
	\[	
\begin{aligned}
		\int_{\Omgc}|\nb v|^2\dd x =& \int_{\Omgc}|\nb u_{1,\Omgc}^\aa|^2\dd x,\qquad
		\int_{\p\Omg}|v|^2\dd \s = \int_{\p\Omg}| u_{1,\Omgc}^\aa|^2\dd \s,\\
		&\int_{\Omgc}| v|^2\dd x = \int_{\Omgc}| u_{1,\Omgc}^\aa|^2\dd x.
\end{aligned}
	\]
	Hence, we infer that $v\in H^1(\Omgc)$ and arrive at
	\[
		\lm_1^\aa(\Omgc) = \frac{\displaystyle\int_{\Omgc}|\nb v|^2\dd x + \aa\int_{\p\Omg}|v|^2\dd \s}{\displaystyle\int_{\Omgc} |v|^2\dd x}.
	\]
	Thus, it follows from~\cite[\S 10.2, Theorem 1]{BS} that $v$ is an eigenfunction of $-\Delta_\aa^{\Omgc}$ corresponding to its lowest eigenvalue $\lm_1^\aa(\Omgc)$. Simplicity of the lowest eigenvalue of $-\Delta_\aa^{\Omgc}$ yields that either $v = u_{1,\Omgc}^\aa$ or $v = -u_{1,\Omgc}^{\aa}$. The second alternative can not occur, because the ground state $u_{1,\Omgc}^\aa$ is positive. Thus, the property~\eqref{eq:gs_symmetry} follows.
\end{proof}

\subsection{The Robin Laplacian on the exterior of a disk}
In this subsection we will analyse the negative discrete spectrum of the Robin Laplacian on the exterior of a disk with an emphasis on the lowest two negative eigenvalues.

Let $\cB\subset\dR^2$ be the disk of radius $R > 0$ centred at the origin. We introduce standard polar coordinates $(r,\tt)$ on $\cBc$.
Let us also introduce the complete family of mutually orthogonal projections
\[	
	\Pi_n \colon L^2(\cBc)\arr L^2(\cBc),\qquad (\Pi_n u)(r,\tt) :=
	\frac{e^{\ii n\tt}}{2\pi}\int_0^{2\pi}
	u(r,\tt') e^{-\ii n\tt'}\dd\tt',\qquad n\in\dZ,
\]
and the unitary mappings
\[
	\sfU_n\colon \ran\Pi_n\arr L^2((R,\infty);r\der r),\quad (\sfU_n u)(r) := \frac{1}{\sqrt{2\pi}}\int_0^{2\pi}
		u(r,\tt')e^{-\ii n\tt'}\dd \tt',\qquad n\in\dZ.
\] 
The family of projections $\{\Pi_n\}_{n\in\dZ}$ induces an orthogonal decomposition 
\[
	L^2(\cBc) = \bigoplus_{n\in\dZ} \ran\Pi_n\simeq \bigoplus_{n\in\dZ} L^2((R,\infty);r\der r).
\]
It is easy to check using~\cite[Propositon 1.15]{S} that $\ran\Pi_n$, $n\in\dZ$, is a reducing subspace for $-\Delta_\alpha^{\cBc}$. Hence, the operator
$-\Delta_\alpha^{\cBc}$ can be decomposed into the orthogonal sum
\begin{equation}\label{eq:ortho}
	-\Delta_\alpha^{\cBc} = \bigoplus_{n\in\dZ}
	\big(\sfU_n^{-1}\sfH_{\alpha,R,n}\sfU_n\big),
\end{equation}
where the self-adjoint fibre operators $\sfH_{\alpha,R,n}$ in $L^2((R,\infty);r\der r)$ are associated with the closed, symmetric, densely defined, and semi-bounded 
quadratic forms
\begin{equation}\label{eq:fiber_form}
\begin{aligned}
	\frh_{\alpha,R,n}[f] &:= \frh_{\alpha}^{\cBc}[\sfU_n^{-1}f] = \int_R^\infty\left(|f'(r)|^2 + \frac{n^2}{r^2}|f(r)|^2\right)r\dd r+\alpha R|f(R)|^2,\\ 
	\dom\frh_{\alpha,R,n} &:= \big\{f\in L^2((R,\infty);r\der r)\colon \sfU_n^{-1} f\in H^1(\cBc)\big\}
	=
	\big\{f\colon f,f'\in L^2((R,\infty);r\der r)\big\}. 
\end{aligned}
\end{equation}
Using the integration by parts we find that the fibre operator is characterised by
\begin{equation}\label{eq:fiber_operator}
\begin{aligned}
	\sfH_{\alpha,R,n}f &= -f''(r)-\frac{f'(r)}{r} + \frac{n^2f(r)}{r^2},\\
	\dom\sfH_{\alpha,R,n} &=\left\{f\colon
	f,f''+\frac{f'}{r}\in L^2((R,\infty);r\der r), f'(R) = \alpha f(R)\right\}.
\end{aligned}
\end{equation}
In view of~\cite[Satz 13.19]{W}, the fibre operator $\sfH_{\alpha,R,n}$ is a rank-one perturbation in the sense of the resolvent difference of a non-negative self-adjoint operator $\sfH_{0,R,n}$ (with $\alpha = 0$).
Hence, by~\cite[\S 9.3, Theorem 3]{BS} the dimension of
the negative spectral subspace of each fibre operator is at most one. It is also straightforward to observe by constructing singular sequences and using compact perturbation argument that $\sess(\sfH_{\alpha,R,n}) = [0,\infty)$ for all $n\in\dZ$.  

Let us introduce the notation for  the lowest spectral point of $\sfH_{\alpha,R,n}$
\[
\lm_1^\alpha(R,n):=\inf\s(\sfH_{\aa,R,n})\le0.
\]
Notice also that $\sfH_{\alpha,R,n} = \sfH_{\alpha,R,-n}$ for any $n\in\dZ$.
Since the domain of $\frh_{\aa,R,n}$ is independent of $n\in\dZ$ and since  for any non-trivial $f\in \dom\frh_{\aa,R,n}$ there holds $\frh_{\alpha,R,m}[f] > \frh_{\alpha,R,n}[f]$ for $|m| > |n|$, we infer that for $|m| > |n|$ by the min-max principle
\[
	\lm^\aa_1(R,m) \ge \lm_1^\aa(R,n),
\]
the inequality being strict provided that 
$\lm_1^\aa(R,m) < 0$.
Thus, it follows from finiteness of the negative discrete spectrum of $-\Delta_\alpha^{\cBc}$ and the orthogonal decomposition~\eqref{eq:ortho}, that there exists $n_\star = n_\star(\alpha)\in\dN_0$ such that
$\lm_1^\aa(R,n) < 0$ if, and only if, $|n| \le n_\star(\alpha)$
and that the negative eigenvalues of $-\Delta_\aa^{\cBc}$ repeated with multiplicities taken into account are given by the sequence 
\[
\lm_1^\aa(R,0) < \lm_1^\aa(R,1)\le
\lm_1^\aa(R,1)<\lm_1^\aa(R,2)\le
\lm_1^\aa(R,2)<\dots<\lm_1^\aa(R,n_\star)\le \lm_1^\aa(R,n_\star).
\]	
The total dimension of the negative spectral subspace of $-\Delta_\alpha^{\cBc}$ is thus given by $N_\alpha(\cBc) = 2n_\star(\alpha)+1$.
The negative discrete spectrum of $-\Delta_\aa^{\cBc}$ consists of the lowest simple eigenvalue corresponding to the fibre $\sfH_{\aa,R,0}$ and of at most finitely many higher negative eigenvalues
of multiplicity two each, corresponding to the fibres $\sfH_{\aa,R,\pm n}$ with $n = 1,2,\dots,n_\star(\aa)$.

In the next proposition we characterise the lowest and the second eigenvalues of $-\Delta_\alpha^{\cBc}$. In the following
$K_\nu$ stands for the modified Bessel function of the second kind and order $\nu\in\dR$.
\begin{prop}\label{prop:disk}
	Let $\cB$ be the disk of radius $R>0$ centred at the origin.
	Let $\alpha < 0$ and the self-adjoint operator $-\Delta_\alpha^{\cBc}$ in $L^2(\cBc)$ be associated with the quadratic form as in~\eqref{eq:form}. Then the following hold.
	\begin{myenum}
	\item 
	The lowest eigenvalue of $-\Delta_{\aa}^{\cBc}$ is given by $\lm_1^\aa(\cBc) = -\xi^2$, where
	$\xi > 0$ is the unique solution of the transcendental equation
	\begin{equation*}
	-\xi K_0'(\xi R) + \aa K_0(\xi R) = 0.
	\end{equation*}
	The  eigenfunction of $-\Delta_\alpha^{\cBc}$ corresponding to its lowest eigenvalue 
	reads in polar coordinates as
	\begin{equation*}\label{eq:1stef}
		u_{1,\cBc}^{\alpha}(r,\tt) = K_0(\xi r).
	\end{equation*}
	Moreover, the function $(0,\infty)\ni R\mapsto \lm_1^\aa(\cBc)$ is strictly decreasing.
	\item 
	The critical coupling constant for $\cBc$ is given by $\alpha_\star(\cBc) = -\frac{1}{R}$. For $\alpha <-\frac{1}{R}$,
	the second eigenvalue of $-\Delta_{\aa}^{\cBc}$ is given by $\lm_2^\aa(\cBc) = -\omg^2$, where
	$\omg > 0$ is the unique solution of the transcendental equation
	\begin{equation}\label{eq:transc}
		-\omg K_1'(\omg R) + \aa K_1(\omg R) = 0.
	\end{equation}
	The orthogonal eigenfunctions of $-\Delta_\alpha^{\cBc}$ corresponding to its second eigenvalue 
	read in polar coordinates as
	\begin{equation}\label{eq:2ndefs}
		\begin{cases}
		u_{2,\cBc}^{\alpha}(r,\tt) = K_1(\omg r) \cos\tt,\\[0.5ex]
		v_{2,\cBc}^{\alpha}(r,\tt) = K_1(\omg r) \sin\tt.
		\end{cases}
	\end{equation}
	Moreover, the function $(-\frac{1}{\aa},\infty)\ni R\mapsto \lm_2^{\aa}(\cBc)$ is strictly decreasing. 
	\end{myenum}
\end{prop}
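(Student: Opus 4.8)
The plan is to reduce everything to the one-dimensional fibre operators $\sfH_{\aa,R,n}$ of the decomposition~\eqref{eq:ortho}. Since $\lm_1^\aa(\cBc)=\lm_1^\aa(R,0)$ and, once the latter is negative, $\lm_2^\aa(\cBc)=\lm_1^\aa(R,1)$ (of multiplicity two, by the ordering of the $\lm_1^\aa(R,n)$ recalled above), it suffices to study the bottom of the spectrum of $\sfH_{\aa,R,0}$ and $\sfH_{\aa,R,1}$. A number $-\mu^2<0$ is an eigenvalue of $\sfH_{\aa,R,n}$ exactly when $-f''-\tfrac{f'}{r}+\tfrac{n^2}{r^2}f=-\mu^2 f$ on $(R,\infty)$ has a nontrivial solution in $\dom\sfH_{\aa,R,n}$. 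Putting $z=\mu r$ turns this into the modified Bessel equation of order $|n|$, whose solutions are combinations of $I_{|n|}(\mu r)$ and $K_{|n|}(\mu r)$; as $I_{|n|}(\mu r)\sim e^{\mu r}/\sqrt r$ is not in $L^2((R,\infty);r\dd r)$, the solution must be $f(r)=c\,K_{|n|}(\mu r)$, which indeed lies in $\dom\sfH_{\aa,R,n}$ for every $\mu>0$, and the boundary condition $f'(R)=\aa f(R)$ becomes $-\mu K_{|n|}'(\mu R)+\aa K_{|n|}(\mu R)=0$. Because each fibre operator is a rank-one perturbation and hence has at most one negative eigenvalue, this transcendental equation has at most one root $\mu>0$; existence of that root, and its value, are then the remaining points.

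For part~(i), take $n=0$ and use $K_0'=-K_1$ to rewrite the equation as $h(\xi):=\xi K_1(\xi R)/K_0(\xi R)=|\aa|$. The map $h$ is continuous on $(0,\infty)$; as $\xi\to0^+$ one has $\xi K_1(\xi R)\to 1/R$ while $K_0(\xi R)\to\infty$, so $h(\xi)\to0$, and as $\xi\to\infty$, $K_1/K_0\to1$, so $h(\xi)\to\infty$. By the intermediate value theorem the equation has a (unique) positive root $\xi$ for every $\aa<0$, hence $\lm_1^\aa(\cBc)=-\xi^2<0$ and the associated eigenfunction, living in the radial fibre, is $u_{1,\cBc}^\aa(r,\tt)=K_0(\xi r)$.

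For part~(ii), take $n=1$ and use $K_1'(z)=-K_0(z)-\tfrac1z K_1(z)$ to bring~\eqref{eq:transc} to the form $\phi(\omg):=\omg K_0(\omg R)/K_1(\omg R)=|\aa|-\tfrac1R$. Here $\phi$ is continuous and strictly positive on $(0,\infty)$, with $\phi(\omg)\to0$ as $\omg\to0^+$ and $\phi(\omg)\to\infty$ as $\omg\to\infty$; hence it attains the value $|\aa|-\tfrac1R$, by a unique $\omg>0$, if and only if $|\aa|-\tfrac1R>0$, i.e. $\aa<-\tfrac1R$. Thus $\lm_1^\aa(R,1)<0\Leftrightarrow\aa<-\tfrac1R$, which in view of $N_\aa(\cBc)=2n_\star(\aa)+1$ gives $\alpha_\star(\cBc)=-\tfrac1R$; and for $\aa<-\tfrac1R$ the second eigenvalue equals $\lm_1^\aa(R,1)=-\omg^2$ with eigenspace spanned by $K_1(\omg r)\cos\tt$ and $K_1(\omg r)\sin\tt$.

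It remains to prove the monotonicity in $R$, which I expect to be the delicate point, since enlarging $R$ simultaneously shrinks $\cBc$ and lengthens its boundary and so a crude comparison fails — one has to use the boundary data of the eigenfunction. Fix $\aa<0$ and $n\in\{0,1\}$ with $\lm_1^\aa(R_1,n)=-\mu_1^2<0$, eigenfunction $f_1(r)=K_n(\mu_1 r)$, and let $R_2>R_1$. Using $f_1$ on $(R_2,\infty)$ as a trial function for $\sfH_{\aa,R_2,n}$ together with Green's identity (valid because $f_1$ solves the fibre equation on the larger interval $(R_1,\infty)$) one gets $\frh_{\aa,R_2,n}[f_1]+\mu_1^2\|f_1\|_{L^2((R_2,\infty);r\dd r)}^2=R_2\,f_1(R_2)\bigl(\aa f_1(R_2)-f_1'(R_2)\bigr)$. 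Now $f_1>0$ and the boundary condition at $R_1$ gives $f_1'(R_1)/f_1(R_1)=\aa$, while $f_1'(r)/f_1(r)=\mu_1 K_n'(\mu_1 r)/K_n(\mu_1 r)$; since $z\mapsto K_n'(z)/K_n(z)$ is strictly increasing on $(0,\infty)$ — a consequence of the strict log-convexity of $K_n$, which follows from the Cauchy--Schwarz inequality applied to $K_n(z)=\int_0^\infty e^{-z\cosh t}\cosh(nt)\,\dd t$ — one obtains $f_1'(R_2)/f_1(R_2)>\aa$, so the right-hand side above is strictly negative. By the min-max principle $\lm_1^\aa(R_2,n)<-\mu_1^2=\lm_1^\aa(R_1,n)$; taking $n=0$ yields the strict decrease of $R\mapsto\lm_1^\aa(\cBc)$, and $n=1$ (on the range $R>-1/\aa$ where $\lm_1^\aa(R,1)<0$) that of $R\mapsto\lm_2^\aa(\cBc)$. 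Apart from this Bessel-monotonicity input, all the remaining steps are routine ODE analysis once the fibre decomposition is in hand.
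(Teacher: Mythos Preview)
Your argument is correct throughout. The identification of the eigenvalues and eigenfunctions via the fibre decomposition, the Bessel ODE, and the analysis of the transcendental equations coincides with the paper's treatment essentially line by line (the paper in fact defers all of item~(i) to~\cite{KL18} and only writes out item~(ii)).

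The monotonicity proof, however, takes a genuinely different route. You restrict the eigenfunction $f_1=K_n(\mu_1\,\cdot\,)$ of $\sfH_{\aa,R_1,n}$ to the smaller interval $(R_2,\infty)$, apply Green's identity to reduce the comparison of Rayleigh quotients to the sign of the boundary term $\aa f_1(R_2)-f_1'(R_2)$, and then settle that sign via the strict log-convexity of $K_n$ (obtained from the integral representation and Cauchy--Schwarz), which makes $f_1'/f_1$ strictly increasing. The paper instead \emph{translates} the eigenfunction, taking $f_2(r):=f_1(r+R_1-R_2)$ as trial function on $(R_2,\infty)$, changes variables back to $(R_1,\infty)$, and finishes with the purely elementary weight inequality $\tfrac{R_1}{R_2}(r+R_2-R_1)<r$ for $r>R_1$ combined with $\frh_{\aa,R_1,1}[f_1]<0$. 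Your approach is conceptually transparent and treats $n=0$ and $n=1$ in one stroke, at the cost of invoking a Bessel-specific convexity fact; the paper's shifting argument uses no special-function input beyond the structure of the fibre form and would transfer unchanged to any operator of the same type.
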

\begin{proof}
	(i) All the statements of this item are shown in \cite[Section 3]{KL18}.

	\smallskip
	
	\noindent (ii)
	It follows from the analysis preceding this proposition that the dimension of the negative spectral subspace of $-\Delta_\aa^{\cBc}$ is larger than one if, and only if, the fibre operator $\sfH_{\aa,R,1}$ ($n=1$) has a negative eigenvalue. In this case the second eigenvalue of $-\Delta_\aa^{\cBc}$ coincides with the negative eigenvalue $\lm_1^\aa(R,1) < 0$ of $\sfH_{\aa,R,1}$.
	In view of~\eqref{eq:fiber_operator} the fibre operator $\sfH_{\alpha,R,1}$  has a negative eigenvalue in the case that the following system
	\begin{equation}\label{eq:system}	
		\begin{cases}
		-f''(r) -\frac{f'(r)}{r} + \frac{f(r)}{r^2} = \lm f(r),&\qquad\text{on}\,\,(R,\infty),\\
		f'(R) = \alpha f(R),&
		\end{cases}
	\end{equation}
	has a non-trivial solution in $L^2((R,\infty);r\der r)$ for some value $\lm < 0$. The second eigenvalue $\lm_2^{\alpha}(\cBc)$ of $-\Delta_\alpha^{\cBc}$ is then given by the value of $\lm < 0$ for which
	the system~\eqref{eq:system} 	has a non-trivial solution in $L^2((R,\infty);r\der r)$.
	 By~\cite[10.25.1-3]{OLBC} the solution in $L^2((R,\infty);r\der r)$ of the ordinary differential equation in the above system is given by 
	\begin{equation}\label{eq:f}
		f(r) = K_1(\omg r)
	\end{equation}
	(up to a multiplication
	by a constant factor), where we use the abbreviation $\omg = \sqrt{-\lm}$. This solution fulfils the boundary conditions if the following transcendental equation is satisfied
	\[
		-\omg K_0(\omg R)-
		\frac{K_1(\omg R)}{R} = \alpha K_1(\omg R),
	\]
	where we employed that $K_1'(x) =-K_0(x)-\frac{K_1(x)}{x}$ (see~\cite[10.6.2]{OLBC}).
	The existence of the negative eigenvalue for $\sfH_{\alpha,R,1}$ is equivalent to
	existence of a solution $\omg > 0$ to the above transcendental equation.
	Let us  rewrite the equation as
	\begin{equation}\label{eq:transc2}
		-\frac{\omg R K_0(\omg R)}{K_1(\omg R)}  = \alpha R + 1.
	\end{equation}
	The function
	\[
		F(x) := \frac{x K_0(x)}{K_1(x)},\qquad x > 0,
	\]
	is clearly continuous and positive. Moreover, it follows from the inequalities (see~\cite[Theorem 1]{S11})
	\[
	\frac{2x}{1+\sqrt{1+4x^2}}\le\frac{K_0(x)}{K_1(x)} < 1,\qquad x > 0,
	\]
	that 
	\[
		\lim_{x\arr0^+} F(x) = 0\qquad\text{and}\qquad\lim_{x\arr\infty} F(x) = \infty.
	\]
	Therefore, the equation~\eqref{eq:transc2} has a solution $\omg> 0$ if, and only if, $\alpha R + 1 < 0$. Hence, we conclude that
	\[
		\alpha_\star(\cBc) = -\frac{1}{R},
	\]
	Under the assumption $\alpha < -\frac{1}{R}$ clearly both operators
	$\sfH_{\alpha,R,-1}$ and $\sfH_{\alpha,R,1}$ have the same negative eigenvalue.
	It follows from the orthogonal decomposition~\eqref{eq:ortho} and 
	the form~\eqref{eq:f} of the solution  in $L^2((R,\infty);r\der r)$ of the system~\eqref{eq:system} that the orthogonal eigenfunction of $-\Delta_\alpha^{\cBc}$ corresponding to the doubly degenerate eigenvalue $\lm_2^\alpha(\cBc) < 0$ can be expressed in polar coordinates as
	\[
		u_{2,\cBc}^{\alpha,\pm}(r,\tt) := K_1(\omg r)e^{\pm\ii \tt}, 
	\]
	where $\omg > 0$ is the unique solution of the transcendental equation~\eqref{eq:transc}. Moreover, the second eigenvalue $-\Delta_\aa^{\cBc}$ is expressed as $\lm_2^\aa(\cBc) = -\omg^2$.
	 The representations of the orthogonal eigenfunctions of $-\Delta_\alpha^{\cBc}$ corresponding to $\lm_2^\alpha(\cBc)$ stated in~\eqref{eq:2ndefs} follow by taking linear combinations 
	  \[
		u_{2,\cBc}^\alpha = \frac12\left(u_{2,\cBc}^{\alpha,+}+
		u_{2,\cBc}^{\alpha,-}\right)\qquad\text{and}\qquad
		v_{2,\cBc}^\alpha = \frac{1}{2\ii}
		\left(u_{2,\cBc}^{\alpha,+}-u_{2,\cBc}^{\alpha,-}\right).
	\] 

	In order to show strict decay of the second eigenvalue as a function of radius on the interval $(-\frac{1}{\aa},\infty)$ it suffices to show that the negative eigenvalue of $\sfH_{\aa,R,1}$ is a strictly decreasing function of the radius on the same interval. Let $R_2 > R_1 > -\frac{1}{\aa}$ be fixed. Let also $f_1\in\dom\sfH_{\aa,R_1,1}\subset\dom\frh_{\aa,R_1,1}$ be the eigenfunction of $\sfH_{\aa,R_1,1}$ corresponding to its lowest eigenvalue. 
	We define the function $f_2\in\dom\frh_{\aa,R_2,1}$ by
	\[
		f_2(r) := f_1\left(r + R_1-R_2\right).
	\]	
	Clearly, one has $f_2\in\dom\frh_{\aa,R_2,1}$.
	Using $f_2$ as a trial function for $\sfH_{\aa,R_2,1}$ we get 
	\[
	\begin{aligned}
		\lm_1^\aa(R_2,1) &\le 
		\frac{\displaystyle\int_{R_2}^\infty
		\left(\left|f_1'(r+R_1-R_2)\right|^2 + \frac{1}{r^2}\left|f_1(r+R_1-R_2)\right|^2\right)r\dd r+\aa R_2|f_1(R_1)|^2}{\displaystyle\int_{R_2}^\infty			\left|f_1\left(r+R_1-R_2\right)\right|^2r
		\dd r}\\
		&=
		\frac{\displaystyle\int_{R_1}^\infty
			\left(\left|f_1'(r)\right|^2 + \frac{1}{(r+R_2-R_1)^2}\left|f_1(r)\right|^2\right)
			\frac{R_1}{R_2}\left(r+R_2-R_1\right)\dd r+\aa R_1|f_1(R_1)|^2}{\displaystyle\int_{R_1}^\infty			\left|f_1\left(r\right)\right|^2
			\frac{R_1}{R_2}\left(r+R_2-
			R_1\right)
			\dd r}\\
		& <
		\frac{\displaystyle\int_{R_1}^\infty
			\left(\left|f_1'(r)\right|^2 + \frac{1}{r^2}\left|f_1(r)\right|^2\right)
			r\dd r+\aa R_1|f_1(R_1)|^2}{\displaystyle\int_{R_1}^\infty			\left|f_1\left(r\right)\right|^2
			r
			\dd r} = \lm_1^\aa(R_1,1),	\end{aligned}
	\]	
	where we used the min-max principle in the first step, performed the change of variables $r\mapsto r+R_1-R_2$ and multiplied the numerator and the denominator by $\frac{R_1}{R_2}$ in the second step, 
	combined the inequalities $\frac{R_1}{R_2}(r+R_2-R_1)< r$ (for all $r > R_1$) and $\frh_{\aa,R_1,1}[f_1] < 0$ in the third step, and used that $f_1$ is an eigenfunction of $\sfH_{\aa,R_1,1}$ corresponding to its lowest eigenvalue in the last step. 
\end{proof}

\section{Monotonicity of the second Robin eigenvalue on an exterior domain}\label{Sec.mono}
In this section, we establish a stronger version of Theorem~\ref{Thm.mono}.

Let $\Omega\subset\dR^2$ be any planar strictly star-shaped $C^\infty$-smooth bounded domain.  
More specifically, we assume that the boundary~$\partial\Omega$
can be parametrised by the mapping
\begin{equation}\label{curve}
\Gamma: [0,2\pi] \to \dR^2\colon
\big\{
\theta \mapsto \rho(\theta) (\cos\theta,\sin\theta)^\top
\big\}
\,,
\end{equation}
where $\rho \in C^\infty([0,2\pi])$ is a positive function
such that $\rho^{(k)}(0)=\rho^{(k)}(2\pi)$ for $k\in\dN_0$.
For the sake of convenience we can extend the function $\rho$ by periodicity to the whole real line. With a slight abuse of notation, we will denote this extension again by $\rho$.
In the case that $\Omg$ is centrally symmetric, the function $\rho$ satisfies $\rho(\tt+\pi) =\rho(\tt)$ for any $\tt\in\dR$.
In the first main result of the paper we obtain 
a monotonicity property for the second eigenvalue in the class of strictly star-shaped centrally symmetric domains. As a by-product of this construction, we also get an estimate on the critical coupling constant for such domains.
\begin{thm}\label{thm1}
	Let $\Omega\subset\dR^2$ be a bounded $C^\infty$-smooth domain. Assume that
	$\Omg$ is strictly star-shaped and centrally symmetric (with respect to the origin). Assume also that the strict inclusion $\cB\subsetneq\Omg$ holds for the disk $\cB\subset\dR^2$ of radius $R > 0$ centred at the origin.  Then the following hold.
	\begin{myenum} 
	\item
	$\lambda_2^\alpha(\Omgc) < \lambda_2^\alpha(\cBc)$ for all $\alpha < -\frac{1}{R}$.
	\item $\alpha_\star(\Omgc) \ge -\frac{1}{R}$.
	\end{myenum}
\end{thm}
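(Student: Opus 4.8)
The plan is to exploit the fact that, for $\alpha < -\tfrac1R$, the second eigenvalue $\lambda_2^\alpha(\cBc)$ is attained (with multiplicity two) on the fibre $n=\pm1$, with eigenfunctions $K_1(\omega r)\cos\theta$ and $K_1(\omega r)\sin\theta$ as in Proposition~\ref{prop:disk}(ii). The min--max characterisation~\eqref{eq:minmax} tells us that to prove $\lambda_2^\alpha(\Omgc) < \lambda_2^\alpha(\cBc)$ it suffices to produce a two-dimensional subspace $\mathcal{L} \subset H^1(\Omgc)$ on which the Rayleigh quotient is strictly below $\lambda_2^\alpha(\cBc) = -\omega^2$. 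The natural candidates are suitable extensions into $\Omgc$ of the two disk eigenfunctions. Concretely, I would extend $K_1(\omega r)$ (defined on $r>R$) to a function $\phi$ on $[0,\infty)$ and set
\begin{equation*}
  w_1(x) := \phi(|x|)\,\frac{x_1}{|x|}, \qquad w_2(x) := \phi(|x|)\,\frac{x_2}{|x|},
\end{equation*}
restricted to $\Omgc$. Since $\Omgc \subset \cBc$ (because $\cB \subsetneq \Omg$), these are well defined on $\Omgc$, and by central symmetry of $\Omg$ one checks the cross term $\int_{\Omgc} w_1 w_2$ vanishes, as do the cross terms in the numerator, so $\mathcal{L} = \operatorname{span}\{w_1,w_2\}$ is genuinely two-dimensional and the Rayleigh quotient on $\mathcal{L}$ is the maximum of the two individual quotients $\mathcal{R}[w_1]$, $\mathcal{R}[w_2]$.

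The key computation is then to show $\mathcal{R}[w_j] < -\omega^2$ for $j=1,2$. Here I would integrate by parts: using that $x_j/|x|$ times $K_1(\omega|x|)$ solves the relevant ODE (it is, up to the angular factor, an $n=1$ eigenfunction of the Helmholtz-type operator with eigenvalue $-\omega^2$), the bulk integral $\int_{\Omgc}(|\nabla w_j|^2 + \omega^2 |w_j|^2)$ should reduce to a boundary integral over $\partial\Omg$ involving $w_j$ and $\partial_\nu w_j$. The crucial point is a comparison of this boundary term with $-\alpha\int_{\partial\Omg}|w_j|^2$: one needs that on $\partial\Omg$ the "effective Robin constant" $\partial_\nu w_j / w_j$ is strictly larger than $\alpha$, which should follow from the radial monotonicity built into $K_1$ together with the strict star-shapedness of $\Omg$ (so that the outer normal $\nu$ has a strictly positive radial component, and $|x| > R$ on $\partial\Omg$ strictly). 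Summing over the angular contributions and using central symmetry to handle the weighting gives the strict inequality $\mathcal{R}[w_j] < -\omega^2$.

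Part~(ii), $\alpha_\star(\Omgc) \ge -\tfrac1R$, follows from~(i) by a limiting/monotonicity argument: if $\alpha \in (-\tfrac1R, 0)$... actually the cleaner route is to observe that (i) already produces, for every $\alpha < -\tfrac1R$, a two-dimensional trial space with Rayleigh quotient $< -\omega^2 < 0$, hence $\lambda_2^\alpha(\Omgc) < 0$, i.e.\ $N_\alpha(\Omgc) \ge 2$ for all $\alpha < -\tfrac1R$; by definition of the critical coupling constant this is exactly $\alpha_\star(\Omgc) \ge -\tfrac1R$. (One should double-check the edge case $\alpha = -\tfrac1R$ itself, but since $\alpha_\star$ is defined via a strict inequality this is automatic.)

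The main obstacle I anticipate is the boundary-term comparison in the second paragraph: turning the geometric hypotheses (star-shapedness, central symmetry, and $\cB \subsetneq \Omg$) into the pointwise inequality needed on $\partial\Omg$ after integration by parts. The integration by parts itself is routine, but controlling the sign of the resulting boundary integrand — which mixes the curvature/normal geometry of $\partial\Omg$ with the decay of the Bessel function $K_1$ — and extracting \emph{strict} inequality (rather than just $\le$) from the strictness of the inclusion will require care. In particular one must verify that the naive extension $\phi = K_1(\omega\,\cdot\,)$ (rather than some modified extension) is already admissible and gives the right sign; if not, a more clever choice of $\phi$ on the annular region, matching $K_1(\omega r)$ for $r$ large, may be needed.
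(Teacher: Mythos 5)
Your overall flavour is right (restrict the two disk eigenfunctions $K_1(\omega r)\cos\theta$, $K_1(\omega r)\sin\theta$ to $\Omgc$, integrate by parts, and control a boundary term), and your derivation of (ii) from (i) is exactly the paper's. But the variational skeleton you chose does not work, and the place where it breaks is precisely where central symmetry has to be used.

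First, the orthogonality claim is false: with $w_1=\phi(r)\cos\theta$, $w_2=\phi(r)\sin\theta$ one has
$\int_{\Omgc}w_1w_2\,dx=\tfrac12\int_0^{2\pi}H(\theta)\sin(2\theta)\,d\theta$ with $H(\theta)=\int_{\rho(\theta)}^\infty\phi(r)^2\,r\,dr$. Central symmetry only makes $H$ $\pi$-periodic, and $\sin(2\theta)$ is itself $\pi$-periodic, so there is no cancellation (take $\rho=1+\varepsilon\sin 2\theta$ to see the cross term is genuinely nonzero); the same applies to the gradient and boundary cross terms. Second, and more fundamentally, the route through the first characterisation in~\eqref{eq:minmax} with $\mathcal L=\mathrm{span}\{w_1,w_2\}$ requires \emph{both} Rayleigh quotients to lie below $\lambda_2^\alpha(\cBc)$. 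After integrating by parts, the normal derivative on $\partial\Omega$ splits into a radial part (which can be made nonpositive pointwise) and an angular part proportional to $\dot\rho(\theta)\phi(\rho(\theta))^2\sin\theta\cos\theta/\rho(\theta)$; the integral of the latter, call it $\gamma_\Omega$, has no definite sign and enters with \emph{opposite} signs for $w_1$ and $w_2$. So at most one of the two trial functions can be guaranteed to beat $\lambda_2^\alpha(\cBc)$, and your scheme collapses. The paper's fix is to use the second (max--min) characterisation in~\eqref{eq:minmax}: one needs only a \emph{single} trial function orthogonal to the true ground state $u^\alpha_{1,\Omgc}$, and one picks $w_1$ or $w_2$ according to the sign of $\gamma_\Omega$ so that the angular contribution becomes $-|\gamma_\Omega|\le 0$. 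Central symmetry enters exactly here: by Proposition~\ref{prop:symm} the ground state is $\pi$-periodic in $\theta$, while $\cos\theta$ and $\sin\theta$ change sign under $\theta\mapsto\theta+\pi$, so the required orthogonality $\int_{\Omgc}w_ju^\alpha_{1,\Omgc}=0$ does hold --- unlike the $w_1$--$w_2$ orthogonality you invoked.

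On the obstacle you flagged: no modification of $\phi=K_1(\omega\,\cdot)$ is needed. One bounds $\alpha\sqrt{\rho^2+\dot\rho^2}\le\alpha\rho$ (as $\alpha<0$) and is left with the radial factor $g(r)=-\omega K_1'(\omega r)+\alpha K_1(\omega r)$ evaluated at $r=\rho(\theta)\ge R$; $g(R)=0$ is the eigenvalue equation~\eqref{eq:transc}, $g<0$ for large $r$ because $\omega+\alpha<0$, and a further zero of $g$ is excluded by the strict monotonicity of $\lambda_2^\alpha(\cBc)$ in the radius (Proposition~\ref{prop:disk}(ii)). Strictness of the final inequality then comes from $\rho>R$ on an open set, not from a pointwise comparison of $\partial_\nu w_j/w_j$ with $\alpha$, which is unavailable because of the signless tangential term.
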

\begin{proof}
We divide this proof in two steps for the sake of convenience of the reader. Both items of the theorem will be proved simultaneously.
	
\noindent\emph{Step 1: orthogonality.}
Let $\alpha <-\frac{1}{R}$ be fixed. Then
by Proposition~\ref{prop:disk} the dimension of the negative spectral subspace of $-\Delta_\alpha^{\cBc}$ is at least three: the lowest eigenvalue of the operator $-\Delta_\alpha^{\cBc}$  is simple, while its second eigenvalue has multiplicity two.  The orthogonal eigenfunctions corresponding to the second eigenvalue of $-\Delta_\alpha^{\cBc}$ given in Proposition~\ref{prop:disk}\,(ii) are represented by
\begin{equation}\label{eq:efs}
	u_{2,\cBc}^\alpha(r,\tt) = K_1(\omg r)\cos\tt\qquad\text{and}\qquad
	v_{2,\cBc}^\alpha(r,\tt) = K_1(\omg r)\sin\tt
\end{equation} 	
with $\lm_2^\alpha(\cBc) =-\omg^2$, where $\omg > 0$ is the unique solution of the transcendental equation~\eqref{eq:transc}.

Recall that by Proposition~\ref{prop:symm}
the ground state $u_{1,\Omgc}^\alpha$ of $-\Delta^{\Omg^{\rm c}}_\alpha$ is centrally symmetric. In polar coordinates, this property of the ground state can be expressed as
\begin{equation}\label{eq:symmetry}
	u_{1,\Omgc}^\alpha(r,\tt) = u_{1,\Omgc}^\alpha(r,\tt+\pi),\qquad\text{for all}\,\, \tt\in[0,\pi),\,\,r > \rho(\tt).
\end{equation}
For the sake of convenience,
by periodicity in the angular variable,
we define $u_{1,\Omgc}^\alpha(r,\tt)$ for any $\tt\in\dR$
provided that $r > \rho(\tt)$.

We remark that the exterior domains satisfy the opposite inclusion $\Omgc\subsetneq\cBc$.
Using the central symmetry of $\Omg$ and~\eqref{eq:symmetry} we obtain 
\begin{equation}\label{eq:orth1}
\begin{aligned}
\int_{\Omg^{\rm c}}u_{2,\cBc}^\alpha(x)u_{1,\Omgc}^\alpha(x)\dd x &= 
\int_{0}^{2\pi}\cos\tt\int_{\rho(\tt)}^\infty
K_1(\omg r)u_{1,\Omgc}^\alpha(r,\tt) \, r\dd r\dd\tt\\
&=
\int_{\pi}^{3\pi}\cos(\hat\tt-\pi)
\int_{\rho(\hat\tt-\pi)}^\infty
K_1(\omg r)u_{1,\Omgc}^\alpha(r,\hat\tt-\pi) \, r\dd r\dd\hat\tt \\
&=
-\int_{0}^{2\pi}\cos\hat\tt	\int_{\rho(\hat\tt)}^\infty
K_1(\omg r)u_{1,\Omgc}^\alpha(r,\hat\tt) \, r\dd r\dd\hat\tt\\
& = -
\int_{\Omg^{\rm c}}u_{2,\cBc}^\alpha(x)u_{1,\Omgc}^\alpha(x)\dd x =0,
\end{aligned}
\end{equation} 
where we performed the substitution $\hat\tt = \pi +\tt$.
Analogously we find that
\begin{equation}\label{eq:orth2}
\begin{aligned}
\int_{\Omg^{\rm c}}v_{2,\cBc}^\alpha(x)u_{1,\Omgc}^\alpha(x)\dd x &= 
\int_{0}^{2\pi}\sin\tt\int_{\rho(\tt)}^\infty
K_1(\omg r)u_{1,\Omgc}^\alpha(r,\tt) \, r\dd r\dd\tt\\
&=
\int_{\pi}^{3\pi}\sin(\hat\tt-\pi)
\int_{\rho(\hat\tt-\pi)}^\infty
K_1(\omg r)u_{1,\Omgc}^\aa(\rho,\hat\tt-\pi) \, r\dd r\dd\hat\tt \\
&=
-\int_{0}^{2\pi}\sin\hat\tt	\int_{\rho(\hat\tt)}^\infty
K_1(\omg r)u_{1,\Omgc}^{\alpha}(r,\hat\tt) \, r\dd r\dd\hat\tt\\
& = -
\int_{\Omg^{\rm c}}v_{2,\cBc}^\alpha(x)u_{1,\Omgc}^\alpha(x)\dd x =0.
\end{aligned}
\end{equation}

\noindent\emph{Step 2: application of the min-max principle.}
Let us introduce the quantity
\begin{equation}	
	\gg_\Omg := \int_0^{2\pi}\frac{\dot{\rho}(\tt)}{\rho(\tt)}K_1(\omg \rho(\tt))^2\sin\tt\cos\tt\dd\tt\in\dR.
\end{equation}
As a trial function for the min-max principle
applied to $-\Delta_\aa^{\Omgc}$ we select the following real-valued function on $\Omgc$
\begin{equation}\label{eq:ustar}
	u_\star := \begin{cases}
	u_{2,\cBc}^\alpha|_{\Omgc},&\quad \gg_\Omg \ge 0,\\[0.3ex]
	v_{2,\cBc}^\alpha|_{\Omgc},&\quad \gg_\Omg <0.
	\end{cases}
\end{equation}
In other words, we choose one of the two trial functions depending on the sign of $\gg_\Omg$.

It is not difficult to see that $u_\star\in H^2(\Omgc)$ being the restriction to $\Omgc$ of a function in the Sobolev space $H^2(\cBc)$. In view of~\eqref{eq:orth1} and~\eqref{eq:orth2}, the function $u_\star$ is orthogonal to the ground state $u_{1,\Omgc}^\alpha$ of $-\Delta_\alpha^{\Omgc}$ in $L^2(\Omgc)$. Finally, by its construction
the function $u_\star$ satisfies the differential equation $-\Delta u_\star = \lm_2^\alpha(\cBc)u_\star$ on $\Omgc$.
Using the second variational characterisation of $\lm_2^\alpha(\Omgc)$ in~\eqref{eq:minmax}, 
we get integrating by parts
\begin{equation}\label{eq:bound}
\begin{aligned}
\lambda_2^\alpha(\Omgc) &\leq \frac{\displaystyle\int_{\Omgc}|\nb u_\star|^2\dd x + \alpha\int_{\p\Omg}|u_\star|^2\dd\s}{\displaystyle\int_{\Omgc}|u_\star|^2\dd x} \\& = 
\frac{\displaystyle\int_{\Omgc} (-\Delta u_\star)u_\star\dd x + \int_{\p\Omg}u_\star\left(-\frac{\p u_\star}{\p\nu} +\alpha u_\star\right)\dd\s}{\displaystyle\int_{\Omgc}|u_\star|^2\dd x}\\
&=
\lm_2^\alpha(\cBc)
+ \frac{\displaystyle\int_{\p\Omg}u_\star\left(-\frac{\p u_\star}{\p\nu} +\alpha u_\star\right)\dd\s}{\displaystyle\int_{\Omgc}|u_\star|^2\dd x},
\end{aligned}
\end{equation}
where $\frac{\p u_\star}{\p\nu}$ stands for the normal derivative of $u_\star$ on $\p\Omg$ with the normal pointing outwards of $\Omg$. In order to prove the inequality in item (i) it remains to show that
\begin{equation}\label{eq:bt_neg}
	\int_{\p\Omg}u_\star\left(-\frac{\p u_\star}{\p\nu} +\alpha u_\star\right)\dd\s <0.
\end{equation}
By using the parameterisation~\eqref{curve}, 
we find $|\dot\Gamma|^2 = \rho^2 + \dot{\rho}^2$ and
\begin{equation}
\nu(\theta) = \frac{1}{|\dot\Gamma(\theta)|}
\left(
\rho(\theta) \cos\theta + \dot{\rho}(\theta) \sin\theta,
\rho(\theta) \sin\theta - \dot{\rho}(\theta) \cos\theta
\right)^\top,
\end{equation}
where $\nu(\tt)$ is the outer unit normal to $\Omg$ at the point $\G(\tt)$.
For the sake of convenience, let us introduce the shorthand notation
\[	
c(\tt) := \begin{cases}
\cos\tt,& \gg_\Omg \ge 0,\\
\sin\tt,& \gg_\Omg < 0,
\end{cases}\qquad\text{and}\qquad J(r) := K_1(\omg r).
\]
By passing to the polar coordinates,
\begin{equation}
\begin{aligned}
\frac{\partial u_\star}{\partial \nu}
&= \nu \cdot \nabla u_\star
= \frac{1}{|\dot\Gamma(\theta)|}
\left(
\rho(\theta) \frac{\partial u_\star}{\partial r}
- \frac{\dot{\rho}(\theta)}{\rho(\tt)} \frac{\partial u_\star}{\partial \theta}  
\right)\\
&= 
\frac{1}{|\dot{\G}(\tt)|}\left(\rho(\tt)  J'(\rho(\tt))c(\tt) - \frac{\dot\rho(\tt)}{\rho(\tt)}
J(\rho(\tt))
c'(\tt)\right)   
.
\end{aligned}\end{equation}
Consequently, we get
\begin{equation*}
\begin{aligned}
-\int_{\p\Omg}
u_\star\frac{\p u_\star}{\partial \nu}\dd \s
&= \int_0^{2\pi}  \left[
-\rho(\theta)J( \rho(\tt)) J'(\rho(\tt)) c^2(\tt)
+ \frac{\dot{\rho}(\theta)}{\rho(\theta)} 
J(\rho(\theta))^2 c'(\tt)c(\tt)
\right]
\der\theta
\\
&=
-\int_0^{2\pi} \rho(\theta) J( \rho(\tt)) J'(\rho(\tt)) c^2(\tt)
\dd\theta - |\gamma_\Omg|\\
&\leq -\int_0^{2\pi} 
\rho(\theta) J(\rho(\tt)) J'(\rho(\theta)) c^2(\theta)\dd\theta.
\end{aligned}  
\end{equation*}
Hence, we get the following estimate for the boundary term appearing in~\eqref{eq:bound}
\begin{equation}\label{eq:estimate}
	\begin{aligned}
	&\int_{\partial\Omg} u_\star 
	\left(-\frac{\partial u_\star}{\partial \nu} + \alpha u_\star \right)\dd\s\\
	&\qquad\leq \int_0^{2\pi} 
	J(\rho(\theta)) \left[
	-\rho(\theta) J'(\rho(\theta)) 
	+ \alpha \sqrt{\rho(\theta)^2+\dot{\rho}(\theta)^2} J(\rho(\theta)) 
	\right] 
	c^2(\theta) \, \der\theta
	\\
	&\qquad\leq \int_0^{2\pi} 
	J(\rho(\theta)) \, \rho(\theta) \left[
	- J'(\rho(\theta)) 
	+ \alpha J(\rho(\theta)) 
	\right] 
	c^2(\theta) \, \der\theta
	\,.
	\end{aligned}  
\end{equation}
	Consider now the following auxiliary
	continuous function
	\begin{equation}
	g(r) := - J'(r) + \alpha J(r) 
	= - \omg K_1'(\omg r) + \alpha K_1(\omg r),\qquad r \ge R.
	\end{equation}
	Observe first that $g(R) = 0$ due to~\eqref{eq:transc}. Using the asymptotic expansion following from~\cite[9.7.2, 9.7.4]{AS} 
	\[
	-\frac{K_1'(x)}{K_1(x)} = 1+\frac{1}{2x}+\cO\left(\frac{1}{x^2}\right),\qquad x\arr\infty,
	\] 
	we get that
	\begin{equation}\label{eq:asymptotics_f}
	g(r) = K_1(\omg r) \left[\omg + \frac{1}{2r}+\aa+\cO\left(\frac{1}{r^2}\right)\right],\qquad r\arr\infty.
	\end{equation}
	It follows from~\cite[Theorem 3.1]{P16} that $\lm_1^\aa(\cBc) \ge -\aa^2$. Using simplicity of the ground-state eigenvalue for $-\Delta_\aa^{\cBc}$ we infer that $\lm_2^\aa(\cBc) > -\aa^2$ or, equivalently, that $\omg +\aa < 0$. Hence, as a consequence of the asymptotics~\eqref{eq:asymptotics_f}
	we get
	that $g(r) < 0$ for all sufficiently large $r > R$. Thus, we conclude from continuity of $g$ that either $g(r) < 0$ for all $r > R$ or there exists $R_1 > R$ such that $g(R_1) = 0$.
	The latter possibility can not occur, because it would mean that $\lm_2^\aa(\cBc) < 0$ is simultaneously the second eigenvalue of the Robin Laplacian on the exterior of the disk of larger radius $R_1 > 0$ with the same boundary parameter $\aa < 0$, which contradicts the monotonicity with respect to radius obtained in Proposition~\ref{prop:disk}\,(ii). Negativity of $g$ combined with the estimate~\eqref{eq:estimate} leads to
	the desired inequality  in~\eqref{eq:bt_neg},
	by which the proof of (i) is complete.
	
	The estimate on the critical coupling constant
	$\aa_\star(\Omgc)$ in (ii)  follows as a by-product.
	Indeed, the eigenvalue inequality in~(i)
	implies that the negative spectral subspace of the operator $-\Delta_\aa^{\Omgc}$
	is of dimension at least two for all $\aa < -\frac{1}{R}$. Hence, we conclude that $\aa_\star(\Omgc)\ge-\frac{1}{R}$.
\end{proof}
\begin{cor}
	Let $\Omega\subset\dR^2$ be a bounded convex $C^\infty$-smooth  domain. Assume that $\Omg$ is centrally symmetric. Then the critical coupling constant satisfies 
	\[
		\alpha_\star(\Omgc) \ge -\frac{1}{r_{\rm i}},
	\]	
	where $r_{\rm i} > 0$ is the in-radius of $\Omg$.
\end{cor}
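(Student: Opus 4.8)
The plan is to derive the corollary directly from Theorem~\ref{thm1}\,(ii), applied to the largest disk centred at the origin that is contained in~$\Omg$, after verifying that for a centrally symmetric convex body this disk has radius exactly $r_{\rm i}$.

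First I would check the geometric hypotheses of Theorem~\ref{thm1}. Since $\Omg=-\Omg$, picking any $p\in\Omg$ gives $-p\in\Omg$, hence $0=\tfrac12(p+(-p))\in\Omg$ by convexity, and as $\Omg$ is open the origin (the centre of symmetry) is an interior point. A bounded convex domain with $C^\infty$-smooth boundary is strictly star-shaped with respect to any of its interior points: every ray emanating from~$0$ meets $\partial\Omg$ exactly once by convexity, and transversally, since a supporting line of $\Omg$ at a boundary point cannot pass through the interior point~$0$; hence $\partial\Omg$ admits the parametrisation~\eqref{curve} with a positive $\rho\in C^\infty([0,2\pi])$, and central symmetry gives $\rho(\theta+\pi)=\rho(\theta)$. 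Thus $\Omg$ satisfies all the assumptions of Theorem~\ref{thm1} once a suitable inscribed disk is identified.

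Next I would set $R:=\min_{\theta}\rho(\theta)=\mathrm{dist}(0,\partial\Omg)$ and let $\cB:=B_R(0)$ be the largest disk centred at the origin contained in~$\Omg$, and prove that $R=r_{\rm i}$. The bound $R\le r_{\rm i}$ is immediate from the definition of the in-radius. Conversely, given any inscribed disk $B_s(c)\subseteq\Omg$, the central symmetry $\sfJ(\Omg)=\Omg$ gives $B_s(-c)=\sfJ(B_s(c))\subseteq\Omg$, and by convexity of~$\Omg$ every point $\tfrac12(x+y)$ with $x\in B_s(c)$ and $y\in B_s(-c)$ lies in~$\Omg$; since the set of such points is precisely $B_s(0)$, we obtain $B_s(0)\subseteq\Omg$ and hence $s\le R$. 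Taking the supremum over all inscribed disks yields $r_{\rm i}\le R$, so $R=r_{\rm i}$. If now $\Omg=\cB$, then $\Omg$ is the disk of radius $r_{\rm i}$ centred at the origin and the claim reduces to the identity $\alpha_\star(\Omgc)=-\tfrac{1}{r_{\rm i}}$ of Proposition~\ref{prop:disk}\,(ii); otherwise $\cB\subsetneq\Omg$ is a strict inclusion and Theorem~\ref{thm1}\,(ii), applied with this~$\cB$, gives $\alpha_\star(\Omgc)\ge-\tfrac{1}{R}=-\tfrac{1}{r_{\rm i}}$.

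I do not anticipate a genuine obstacle. The only slightly delicate points I would be careful with are the smoothness and positivity of the radial parametrisation for a merely convex (possibly not strictly convex) $C^\infty$-smooth domain, and the elementary identification of $\{\tfrac12(x+y):x\in B_s(c),\ y\in B_s(-c)\}$ with $B_s(0)$; both are routine. Once $R=r_{\rm i}$ is in place, Theorem~\ref{thm1} supplies all the analysis — its proof uses only $\overline{\cB}\subseteq\overline{\Omg}$ together with $\rho\ge R$ and $\rho>R$ on a set of positive measure (automatic as soon as $\Omg\ne\cB$), and not that the inscribed disk stays away from $\partial\Omg$.
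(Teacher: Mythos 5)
Your proof is correct and follows essentially the same route as the paper: verify that $\Omega$ is strictly star-shaped with respect to the origin and that, by central symmetry and convexity (averaging an inscribed disk with its reflection), a disk of radius $r_{\rm i}$ can be inscribed with centre at the origin, then invoke Theorem~\ref{thm1}\,(ii). Your only additions are routine extra care: the Minkowski-sum identification of the midpoint set with $B_s(0)$ in place of the paper's convex-hull phrasing, and the explicit treatment of the degenerate case $\Omega=\cB$ via Proposition~\ref{prop:disk}\,(ii).
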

\begin{proof}
	First, we point out that the domain $\Omg$ is strictly star-shaped with respect to the origin.
	Thus, in view of Theorem~\ref{thm1}\,(ii) it suffices to show that the origin can be chosen as the centre of an inscribed disk for $\Omg$ of the largest radius. Suppose that $\cB_{r_{\rm i}}(x_0)\subset\Omg$ 
	for some $x_0\in\Omg$, which is not necessarily the origin. By the symmetry we can inscribe in $\Omg$ the disk $\cB_{r_{\rm i}}(-x_0)$ of the same radius centred at $-x_0$. By convexity  we get that $\Omg$
	contains the convex hull of $\cB_{r_{\rm i}}(x_0)\cup\cB_{r_{\rm i}}(-x_0)$. For simple geometric reasons this convex hull contains the disk $\cB_{r_{\rm i}}(0)$ (centred at the origin). Thus, the origin can also be chosen as a centre of an inscribed disk of the largest radius. Essentially, we have shown that the set of centres of inscribed disks of the largest radius for $\Omg$ contains the origin. 
\end{proof}

\section{Optimisation of the second Robin eigenvalue on an exterior domain with fixed elastic energy of the boundary}\label{Sec.optim}
In this section, we establish a stronger version of Theorem~\ref{Thm.lambda2}.
In order to do so, we modify the strategy used in~\cite{EL22} 
to show an optimisation result for the second Robin eigenvalue
under a constraint on the maximum of the curvature. 

Let $\Omg\subset\dR^2$ be a bounded, convex domain with $C^\infty$-smooth boundary $\p\Omg$ of length $L > 0$. 
We parameterise the boundary of $\Omg$ in the clockwise direction by the unit-speed mapping $\s\colon [0,L]\arr\dR^2$ with $|\dot\s(s)| = 1$ for all $s\in[0,L]$. The unit tangential vector to the boundary is introduced by $\tau(s) := \dot\s(s) = (\tau_1(s),\tau_2(s))^\top$. The outer unit normal vector to $\Omg$ at the point $\s(s)$ is given by $\nu(s) = (-\tau_2(s),\tau_1(s))^\top$. 
We denote by $\kp\colon[0,L]\arr[0,\infty)$ the curvature of $\p\Omg$,
which we define by the Frenet formula 
\begin{equation}\label{eq:Frenet}
\dot\tau = -\kp \nu\,.
\end{equation}
The definition is fixed in such a way 
that~$\kappa$ is positive if~$\Omega$ is strictly convex.

According to~\cite[Section 4]{KL18}, the mapping
\[
(\dR/ (L\dZ))\times\dR_+\ni (s,t)\mapsto \s(s)+t\nu(s)
\]
is a diffeomorphism onto $\Omgc$. It defines parallel coordinates $(s,t)$ on $\Omgc$. 
Using the Frenet formula~\eqref{eq:Frenet}, 
one easily gets 
(see \eg~\cite[Section 4]{L23}) that for any $u\in L^2(\Omgc)$ 
\begin{equation}\label{eq:integral}
	\int_{\Omg^{\rm c}}|u|^2\dd x = \int_0^L\int_0^\infty |u(s,t)|^2 \, (1+\kp(s)t)\dd t\dd s,
\end{equation}
and that the gradient in the parallel coordinates $(s,t)$ is given by  \begin{equation}\label{eq:gradient}	
\nabla   = \frac{\tau(s)}{1+\kp(s) t}\p_s + \nu(s)\p_t\,.
\end{equation}

Recall that the \emph{elastic energy} of $\p\Omg$ is introduced by the formula
\begin{equation}\label{eq:elastic}
E(\p\Omg) := \frac12\int_0^L \kp^2(s)\dd s\,.
\end{equation}
We remark that the elastic energy can be defined by the same formula also for less regular $C^2$-smooth (and even $C^{1,1}$-smooth) not necessarily convex domains. 
The literature on the elastic energy of closed curves is quite extensive. For a detailed bibliography on elastic energies of closed curves in the plane, we refer to the
classical paper~\cite{LS85} or the more recent~\cite{Sa12}. An isoperimetric inequality for the elastic energy of the boundary of a convex domain under fixed area constraint was obtained by Gage in~\cite{Ga83}. Namely, he proved the inequality
\[
	E(\p\Omg) \ge \frac{\pi}{2}\frac{|\p\Omg|}{|\Omg|}
\]
for any bounded convex 
$C^2$-smooth domain $\Omg$, in which the equality is attained for disks. The above inequality is known to be false if the convexity condition is dropped. The inequality by Gage combined with the isoperimetric inequality yields as a consequence
\begin{equation}\label{eq:BH}
	E^2(\p\Omg)|\Omg| \ge \pi^3.
\end{equation}
Later it was shown in~\cite[Theorem 1.1]{BH17} that in the class of $C^\infty$-smooth simply-connected bounded domains the isoperimetric inequality~\eqref{eq:BH} holds without the convexity assumption on $\Omg$ and the equality is attained if, and only if, $\Omg$ is a disk. This inequality implies that among bounded simply-connected $C^\infty$-smooth domains of fixed area the disk is the unique minimiser of the elastic energy of the boundary.

\begin{remark}\label{rem:elastic_perimeter}
In this remark we discuss the relation between the perimeters of a general bounded simply-connected domain and the disk under the assumption that the elastic energies of their boundaries coincide.
Assume that a bounded $C^\infty$-smooth simply-connected  domain $\Omg\subset\dR^2$ (with curvature
of the boundary $\kp$ and of perimeter $L$) is not congruent to the disk $\cB\subset\dR^2$, which satisfies
\[
	E(\p\Omg) = E(\p \cB) = \frac{\pi}{R},
\]
where $R> 0$ is the radius of $\cB$. 
Under this geometric assumption, 
we get using the total curvature identity and the Cauchy--Schwarz inequality
\[
2\pi = \int_0^L\kp(s)\dd s < (2E(\p\Omg))^{1/2}\sqrt{L}\,,
\]
which implies that
\[	
L > \frac{2\pi^2}{E(\p\Omg)} =2\pi R = |\p \cB|\,.
\]
Thus, under fixed elastic energy of the boundary the disk is the unique minimiser of the perimeter among bounded simply-connected $C^\infty$-smooth domains. 
\end{remark}

Now we are ready to formulate and prove the second main result of the paper on 
the isoelastic inequality for the second Robin eigenvalue on exterior domains and a related geometric bound on the critical coupling constant.
\begin{thm}\label{thm2}
	Let $\Omg\subset\dR^2$ be a bounded convex $C^\infty$-smooth domain with the boundary $\p\Omg$ of length $L > 0$ and $\cB\subset\dR^2$ be the disk of radius $R >0$
	such that $E(\p\Omg) = E(\p \cB)$.
	Let $\kp\colon[0,L]\arr[0,\infty)$ be the curvature of $\p\Omg$ defined in the arc-length parametrisation.
	\begin{myenum}
	\item For $\Omg$ being not congruent to $\cB$, there holds
	\[
	\lm_2^\aa(\Omg^{\rm c}) < \lm_2^\aa(\cB^{\rm c}),\qquad\text{for all}\,\,\aa < -\frac{1}{R}.
	\]
	\item The critical coupling constant for $\Omgc$ satisfies
	\[
		\aa_\star(\Omgc) \ge-\frac{1}{2\pi}\int_0^L \kp^2(s)\dd s.
	\]
	\end{myenum}
\end{thm}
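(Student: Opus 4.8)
The plan is to estimate $\lm_2^\aa(\Omgc)$ from above through the min-max principle~\eqref{eq:minmax}, using a two-dimensional trial subspace of $H^1(\Omgc)$ modelled on the second eigenfunctions~\eqref{eq:2ndefs} of $-\Delta_\aa^{\cBc}$. Since $\Omg$ is convex I work in the parallel coordinates $(s,t)$ of Section~\ref{Sec.optim}, which identify $\Omgc$ with $(\dR/L\dZ)\times\dR_+$ and for which $\|u\|_{L^2(\Omgc)}^2$ and $\nb u$ are given by~\eqref{eq:integral} and~\eqref{eq:gradient}. Fix $\aa<-\tfrac1R$, let $\omg>0$ solve~\eqref{eq:transc} so that $\lm_2^\aa(\cBc)=-\omg^2$, and abbreviate $J(t):=K_1(\omg(R+t))$; recall from the proof of Theorem~\ref{thm1} that $\omg+\aa<0$ and, from~\eqref{eq:transc}, that $-J'(0)+\aa J(0)=0$. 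Write $\psi(s):=\int_0^s\kp$ for the turning angle of $\p\Omg$, which increases from $0$ to $2\pi$ over $[0,L]$, and for a phase $\gg\in[0,\pi)$ set
\[
v_\gg(s,t):=J(t)\,\cos\bigl(\psi(s)-\gg\bigr),
\]
a smooth function in $H^2(\Omgc)$ that, when $\Omg=\cB$, is a linear combination of the eigenfunctions in~\eqref{eq:2ndefs}.

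First I would fix the phase to secure orthogonality to the ground state. The map $\gg\mapsto\langle v_\gg,u_{1,\Omgc}^\aa\rangle_{L^2(\Omgc)}$ has the form $a\cos\gg+b\sin\gg$, so it vanishes at some $\gg_\star$; set $u_\star:=v_{\gg_\star}$, so that $u_\star\perp u_{1,\Omgc}^\aa$ in $L^2(\Omgc)$. Because $u_{1,\Omgc}^\aa\in\dom(-\Delta_\aa^{\Omgc})$ with eigenvalue $\lm_1^\aa(\Omgc)$, one has $\frh_\aa^{\Omgc}[u_{1,\Omgc}^\aa,u_\star]=\lm_1^\aa(\Omgc)\langle u_{1,\Omgc}^\aa,u_\star\rangle=0$; hence on $\cL:=\mathrm{span}\{u_{1,\Omgc}^\aa,u_\star\}$ the Rayleigh quotient is a convex combination of $\lm_1^\aa(\Omgc)$ and of the Rayleigh quotient of $u_\star$, and since $\lm_1^\aa(\Omgc)$ is the infimum of the Rayleigh quotient over all of $H^1(\Omgc)$, the first characterisation in~\eqref{eq:minmax} gives $\lm_2^\aa(\Omgc)\le\frh_\aa^{\Omgc}[u_\star]/\|u_\star\|_{L^2(\Omgc)}^2$. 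It therefore suffices to prove
\[
\frh_\aa^{\Omgc}[u_\star]+\omg^2\,\|u_\star\|_{L^2(\Omgc)}^2\ \le\ 0,
\]
with strict inequality unless $\Omg$ is a disk congruent to $\cB$. This yields (i); and since it forces $\lm_2^\aa(\Omgc)<0$ for every $\aa<-\tfrac1{2\pi}\int_0^L\kp^2$, hence $N_\aa(\Omgc)\ge2$ there, it also yields (ii).

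To establish the displayed inequality I would compute its left-hand side in parallel coordinates. By~\eqref{eq:gradient} the unit normal on $\p\Omg$ is $\p_t$ and $\p_s v_{\gg_\star}$ contains a factor $\kp$, so $|\nb u_\star|^2$ decomposes without mixed terms; an integration by parts in $t$ that invokes the modified Bessel equation satisfied by $K_1$ and then~\eqref{eq:transc} makes the boundary term $\aa\int_{\p\Omg}|u_\star|^2$ cancel exactly against a boundary contribution arising from the normal integration. After the substitution $\psi=\psi(s)$, so that $\D s=\D\psi/\kp$ and the isoelastic constraint $\tfrac12\int_0^L\kp^2=\tfrac\pi R$ turns into $\int_0^{2\pi}\kp\,\D\psi=\tfrac{2\pi}R$ (that is, the $\psi$-mean of $\kp$ equals $\tfrac1R$), one is left with the identity
\[
\frh_\aa^{\Omgc}[u_\star]+\omg^2\|u_\star\|_{L^2(\Omgc)}^2
=\int_0^{2\pi}\Bigl[A\bigl(\kp(\psi)\bigr)\sin^2\bigl(\psi-\gg_\star\bigr)+B\bigl(\kp(\psi)\bigr)\cos^2\bigl(\psi-\gg_\star\bigr)\Bigr]\,\D\psi,
\]
where $A,B$ depend only on $\omg$ and $R$, are strictly concave on $(0,\infty)$, and satisfy $A(\tfrac1R)=-B(\tfrac1R)>0$. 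Convexity of $\Omg$, i.e.\ $\kp\ge0$, is used here both to legitimise the parallel coordinates and to guarantee $\tfrac1{1+\kp t}\le1$ and $A\ge0$.

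The main obstacle is to show that this last integral is $\le0$, with equality only when $\kp\equiv\tfrac1R$. Its $\psi$-independent part is immediately controlled: by Jensen's inequality and the mean constraint, $\int_0^{2\pi}(A+B)(\kp)\,\D\psi\le2\pi\,(A+B)(\tfrac1R)=0$, strictly unless $\kp$ is constant. The delicate point is the remaining second-harmonic part in $\psi$. Here one exploits that the integral above, regarded as a function of the phase, is a trigonometric polynomial of degree two, and combines the Jensen estimate on its mean with the very orthogonality relation $\langle v_\gg,u_{1,\Omgc}^\aa\rangle=0$ that selected $\gg_\star$; equivalently, one may replace $v_\gg$ by its $L^2(\Omgc)$-projection off $u_{1,\Omgc}^\aa$ — which stays inside a two-dimensional subspace of the form $\mathrm{span}\{u_{1,\Omgc}^\aa,\cdot\}$ — and average the resulting inequality over $\gg$, thereby reducing matters to bounding the angular content of $u_{1,\Omgc}^\aa$ by the deviation of $\kp$ from $\tfrac1R$ (the latter being precisely $-\int_0^{2\pi}(A+B)(\kp)\,\D\psi\ge0$). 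I expect this last quantitative step to be the heart of the argument. The strict inequality when $\Omg$ is not congruent to $\cB$ comes from the strict concavity in the Jensen step and is consistent with the strict inequality $L>2\pi R$ of Remark~\ref{rem:elastic_perimeter}; item (ii) then follows as indicated, and in the borderline case $\Omg\cong\cB$ item (i) is the equality $\lm_2^\aa(\Omgc)=\lm_2^\aa(\cBc)$.
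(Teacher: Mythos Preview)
Your proposal has a genuine gap at precisely the point you flag as ``the heart of the argument''. After the Jensen step handles the mean $\int_0^{2\pi}(A+B)(\kp)\,\D\psi\le 0$, you are left with the second-harmonic term $\tfrac12\int_0^{2\pi}\bigl[A(\kp)-B(\kp)\bigr]\cos\bigl(2(\psi-\gg_\star)\bigr)\,\D\psi$, and there is no mechanism to control it. The phase~$\gg_\star$ is fixed by the single linear constraint $\langle v_{\gg_\star},u_{1,\Omgc}^\aa\rangle=0$, which involves the \emph{unknown} ground state of~$\Omgc$ and carries no information about the second Fourier mode of~$\kp$ in~$\psi$. Averaging over~$\gg$ kills that harmonic in the \emph{numerator}, but Rayleigh quotients do not average, and the projected trial functions for different~$\gg$ are not mutually orthogonal in general, so no min--max argument on a larger subspace rescues the estimate. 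For a convex~$\Omg$ whose curvature has a sizeable second harmonic in~$\psi$, the sign of your integral can go the wrong way for the particular~$\gg_\star$ forced by orthogonality.

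The paper's proof sidesteps this obstacle by a different choice of two-dimensional trial space that never produces the angular weights $\cos^2,\sin^2$ in the first place. Instead of a real combination~$v_\gg$ paired with the unknown ground state of~$\Omgc$, it takes the \emph{complex} function $v_\star(s,t)=g(t)\bigl(\tau_1(s)+\ii\tau_2(s)\bigr)$ together with the purely radial $u_\star(s,t)=f(t)$, where~$f,g$ are the radial profiles of the first and second disk eigenfunctions. Since $|\tau_1+\ii\tau_2|^2\equiv1$ and $|\dot\tau_1+\ii\dot\tau_2|^2=\kp^2$, the quantities $\|v_\star\|^2$, $\|\nb v_\star\|^2$ and $\int_{\p\Omg}|v_\star|^2$ contain no trigonometric factor in~$s$; the only curvature-dependent piece is $\int_0^L\int_0^\infty\frac{\kp^2(s)}{1+\kp(s)t}|g(t)|^2\,\D t\,\D s$, handled by Jensen's inequality for the strictly concave map $x\mapsto x/(1+t\sqrt{x})$ applied to~$\kp^2$ (whose arclength mean is $2E(\p\Omg)/L=2\pi/(RL)$), together with $L>2\pi R$ from Remark~\ref{rem:elastic_perimeter}. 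The required orthogonality of~$u_\star$ and~$v_\star$---in $L^2(\Omgc)$, in the form, and on~$\p\Omg$---follows from the closed-curve identities $\int_0^L\tau_j\,\D s=0$ and $\int_0^L\kp\,\tau_j\,\D s=-\int_0^L\dot\nu_j\,\D s=0$, with no reference to~$u_{1,\Omgc}^\aa$. In short: replace your real $v_{\gg_\star}$ by the complex $g(t)e^{\ii\psi(s)}$ and replace the $\Omgc$-ground state by the transplanted disk ground state~$f(t)$; the second-harmonic obstruction then disappears.
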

\begin{proof}
	We will prove both items simultaneously.
	For convenience of the reader the proof is divided into three steps. Throughout the proof
	of both items we assume without loss of generality that $\Omg$ is not congruent to $\cB$.
	
	\noindent {\it Step 1: trial functions.}
	In this step we construct two linearly independent trial functions
	in the Sobolev space $H^1(\Omg^{\rm c})$
	and provide estimates for them.
	
	It follows from Proposition~\ref{prop:disk}
	that there exist  non-trivial real-valued functions $f,g\in L^2(\dR_+; (R+t)\dd t)$ with $f',g'\in L^2(\dR_+;(R+t)\dd t)$ such that 
	\[
	\begin{aligned}
	\ker\big(-\Delta_\aa^{\cBc} - \lm_1^\aa(\cBc)\big) &= {\rm span}\,\{f(r-R)\},\\
	\ker\big(-\Delta_\aa^{\cBc} - \lm_2^\aa(\cBc)\big) &= {\rm span}\,\big\{g(r -R)e^{\ii\tt},g(r-R)e^{-\ii\tt}\big\}, 
	\end{aligned}
	\]
	where the functions in the spans are written in polar coordinates on $\cBc$.
	We remark that in terms of modified Bessel functions the functions $f$ and $g$ can be expressed as
	\[
		f(t) = K_0\left((-\lm_1^\aa(\cBc))^{\frac12}(t+R)\right)\quad\text{and}\quad g(t) = K_1\left((-\lm_2^\aa(\cBc))^{\frac12}(t+R)\right).
	\]
	However, the explicit expression for $f$ and $g$ will not be used in the proof. 
	Substituting the functions $f(r -R)$ and $g(r-R)e^{\ii\tt}$ into the Rayleigh quotient for $-\Delta_{\aa}^{\cBc}$, we get using the second variational characterisation in~\eqref{eq:minmax} that
	\begin{subequations}
	\begin{align}
		\label{eq:disk1}
		\lm_1^\aa(\cBc) &= \frac{\displaystyle\int_0^\infty|f'(t)|^2(R+t)\dd t + \aa R|f(0)|^2}{\displaystyle\int_0^\infty|f(t)|^2(R+t)\dd t} < 0,\\
				\label{eq:disk2}
		\lm_2^\aa(\cBc) &= \frac{\displaystyle\int_0^\infty|g'(t)|^2(R+t)\dd t +\int_0^\infty\frac{|g(t)|^2}{R+t}\dd t + \aa R|g(0)|^2}{\displaystyle\int_0^\infty|g(t)|^2(R+t)\dd t} < 0.
	\end{align}
	\end{subequations}
	Let us introduce two auxiliary functions on $\Omg^{\rm c}$ in the parallel coordinates $(s,t)$ by
	\[
	u_\star(s,t) := f(t)\quad\text{and}\quad
	v_\star(s,t) := g(t)\big[\tau_1(s) + \ii\tau_2(s)\big]\,.
	\]
	We obtain using~\eqref{eq:integral} that
	\begin{equation}\label{eq:L2norms}
	\begin{aligned}
	\int_{\Omg^{\rm c}}|u_\star|^2\dd x &= \int_0^\infty|f(t)|^2(L+2\pi t)\dd t,\\
	\int_{\Omg^{\rm c}}|v_\star|^2\dd x &= \int_0^\infty|g(t)|^2(L+2\pi t)\dd t.
	\end{aligned}
	\end{equation}  
	Moreover, we get combining~\eqref{eq:integral} with the expression for the gradient in~\eqref{eq:gradient} that
	\begin{equation}\label{eq:L2grad}
	\begin{aligned}
	\int_{\Omg^{\rm c}}|\nabla u_\star|^2\dd x &= \int_0^\infty |f'(t)|^2(L+2\pi t)\dd t,\\
	\int_{\Omg^{\rm c}} |\nabla v_\star|^2\dd x &= \int_0^\infty|g'(t)|^2(L+2\pi t) + \int_0^L
	\int_0^\infty\frac{\kp^2(s)}{1+t\kp(s)}|g(t)|^2\dd t \dd s\,,
	\end{aligned}
	\end{equation}
	where the Frenet formula~\eqref{eq:Frenet} was employed in the evaluation of the second integral. The identities~\eqref{eq:L2norms} and~\eqref{eq:L2grad} combined with the properties of the functions $f$ and $g$ (stated above) yield that $u_\star,v_\star\in H^1(\Omg^{\rm c})$.
	
	Consider the auxiliary function
	\[
	F_t(x) := \frac{x}{1+t \sqrt{x}}\,,\qquad t > 0\,. 
	\]
	Differentiating the function $F_t$ (with respect to $x$) twice we get
	\[
	F''_t(x) = -\frac{t(t\sqrt{x}+3)}{4\sqrt{x}(t\sqrt{x}+1)^3} < 0\,,\qquad\text{for all}\,\,x>0\,.
	\]
	Thus, the function $F_t$ is strictly concave
	and applying Jensen's inequality~\cite[Theorem 2.2]{LL01}
	taking into account that the curvature $\kp$ is not a constant function we get for any $t > 0$ 
	\[
	\begin{aligned}
	\frac{1}{L}\int_0^L \frac{\kp^2(s)}{1+t\kp(s)}\dd s&=\frac{1}{L}
	\int_0^L F_t(\kp^2(s))\dd s <  F_t\left(\frac{1}{L}\int_0^L\kp^2(s)\dd s\right)\\
	& = \frac{\frac{1}{L}\frac{2\pi}{R}}{1+t\sqrt{\frac{1}{L}}\sqrt{\frac{2\pi}{R}}}    
	=
	\frac{\frac{1}{\sqrt{L}}\frac{2\pi}{R}}{\sqrt{L}+t\sqrt{\frac{2\pi}{R}}}
	< 
	\frac{\frac{1}{\sqrt{2\pi R}}\frac{2\pi}{R}}{\sqrt{2\pi R}+t\sqrt{\frac{2\pi}{R}}}
	=\frac{1}{R}\cdot \frac{1}{R+t}\,,
	\end{aligned}
	\]
	where we used that $E(\p\Omg) = \frac{\pi}{R}$ for $R$ being the radius of $\cB$ and that $L > 2\pi R$ (see Remark~\ref{rem:elastic_perimeter}). As a consequence of this bound we obtain
	\begin{equation}\label{eq:grad_estimate}
	\int_{\Omg^{\rm c}}|\nabla v_\star|^2\dd x < \int_0^\infty|g'(t)|^2(L+2\pi t)\dd t +\frac{L}{R}\int_0^\infty\frac{|g(t)|^2}{R+t}\dd t\,.
	\end{equation}

	\noindent{\it Step 2: orthogonality.}
	In this step we show that $u_\star$ and $v_\star$ are orthogonal in $L^2(\Omg^{\rm c})$, that their gradients are orthogonal in $L^2(\Omg^{\rm c};\dC^2)$, and that their traces are orthogonal in $L^2(\p\Omg)$. 
	Using the Frenet formula~\eqref{eq:Frenet} we obtain that
	\[
	\begin{aligned}
	\int_{\Omg^{\rm c}} u_\star\ov{v_\star}\dd x 
	& =\int_0^L\int_0^\infty f(t)g(t)(\tau_1(s)-\ii\tau_2(s))(1+\kp(s)t)\dd t\dd s\\
	&=
	\int_0^\infty f(t)g(t)
	\left[\int_0^L(\dot\s_1(s)-\ii\dot\s_2(s))\dd s
	+t\int_0^L \big(\kp(s)\tau_1(s)-\ii \kp(s)\tau_2(s)\big)\dd s
	\right]\dd t \\
	&=-\int_0^\infty f(t)g(t)t\left[\int_0^L(\dot\tau_2(s) + \ii\dot\tau_1(s))\dd s\right]\dd t = 0\,.
	\end{aligned} 
	\]
	Combining the Frenet formula and the expression for the gradient in~\eqref{eq:gradient} and performing a computation similar to the one above we get
	\[
	\int_{\Omg^{\rm c}} \nabla u_\star\cdot\ov{\nabla v_\star}\dd x 
	=\int_0^L\int_0^\infty f'(t)g'(t)(\tau_1(s)-\ii\tau_2(s))(1+\kp(s)t)\dd t\dd s=0\,.
	\]
	Finally, we also obtain that
	\[
	\int_{\p\Omg} u_\star\ov{v_\star}\dd\s = f(0)g(0)\int_0^L
	\big(\tau_1(s)-\ii\tau_2(s)\big)\dd s =0\,.
	\]
	\noindent
	{\it Step 3: min-max principle.}
	Recall that the quadratic form $\frh_\aa^{\Omgc}$ for the self-adjoint operator $-\Delta^{\Omg^{\rm c}}_\aa$ acting in the Hilbert space $L^2(\Omg^{\rm c})$ is defined as in~\eqref{eq:form}.
	We introduce for $u\in H^1(\Omg^{\rm c})$, $u\ne0$, the notation for the Rayleigh quotient evaluated on $u$
	\[
	R[u] := \frac{\frh_\aa^{\Omg^{\rm c}}[u]}{\|u\|^2_{L^2(\Omg^{\rm c})}}.
	\] 
	Using the first identities in~\eqref{eq:L2norms} and in~\eqref{eq:L2grad} we get
	\begin{equation}\label{eq:R1}
	\begin{aligned}
	R[u_\star] &= \frac{\displaystyle
		\int_0^\infty|f'(t)|^2(L+2\pi t)\dd t +\aa L|f(0)|^2}{\displaystyle
		\int_0^\infty |f(t)|^2(L+2\pi t)\dd t}\\
	&=	
	\frac{\displaystyle
		\int_0^\infty|f'(t)|^2\left(R+\frac{2\pi R}{L} t\right)\dd t +\aa R|f(0)|^2}{\displaystyle
		\int_0^\infty |f(t)|^2\left(R+\frac{2\pi R}{L} t\right)\dd t}\\
	& <
	\frac{\displaystyle
		\int_0^\infty|f'(t)|^2(R+ t)\dd t +\aa R|f(0)|^2}{\displaystyle
		\int_0^\infty |f(t)|^2(R+t)\dd t} = \lm_1^\aa(\cBc) < \lm_2^\aa(\cBc),
	\end{aligned} 
	\end{equation}
	where we used in between $L > 2\pi R$ combined with~\eqref{eq:disk1}. 
		
	Employing the second identity in~\eqref{eq:L2norms} and the estimate~\eqref{eq:grad_estimate} we get
	\begin{equation}\label{eq:R2}
	\begin{aligned}
	R[v_\star] & < \frac{\displaystyle
		\int_0^\infty|g'(t)|^2(L+2\pi t)\dd t +\frac{L}{R}\int_0^\infty \frac{|g(t)|^2}{t+R}\dd t + \aa L|g(0)|^2}{\displaystyle\int_0^\infty|g(t)|^2(L+2\pi t)\dd t}\\
	&=\frac{\displaystyle
		\int_0^\infty|g'(t)|^2\left(R+\frac{2\pi R}{L} t\right)\dd t +\int_0^\infty \frac{|g(t)|^2}{t+R}\dd t + \aa R|g(0)|^2}{\displaystyle\int_0^\infty|g(t)|^2\left(R+\frac{2\pi R}{L} t\right)\dd t} \\
	&<
	\frac{\displaystyle
		\int_0^\infty|g'(t)|^2\left(R+ t\right)\dd t +\int_0^\infty \frac{|g(t)|^2}{t+R}\dd t + \aa R|g(0)|^2}{\displaystyle\int_0^\infty|g(t)|^2\left(R+ t\right)\dd t} = \lm_2^\aa(\cB^{
		\rm c})\,,
	\end{aligned}
	\end{equation}
	where we used in between $L > 2\pi R$ combined with~\eqref{eq:disk2}.
	
	For any $u = a u_\star + bv_\star\neq 0$ with $a,b\in\dC$ we get employing
	\eqref{eq:R1},~\eqref{eq:R2} and the orthogonality properties obtained in Step 2
	\[
	\begin{aligned}
	\frh_\aa^{\Omg^{\rm c}}[u] &= |a|^2\frh_\aa^{\Omg^{\rm c}}[u_\star]
	+|b|^2\frh_\aa^{\Omg^{\rm c}}[v_\star]\\
	& < \lm_2^\aa(\cB^{\rm c})\left(|a|^2\|u_\star\|^2_{L^2(\Omg^{\rm c})} + |b|^2\|v_\star\|^2_{L^2(\Omg^{\rm c})}\right) = \lm_2^\aa(\cB^{\rm c})\|u\|^2_{L^2(\Omg^{\rm c})}\,.
	\end{aligned}
	\]
	Thus, we conclude from the first characterisation in~\eqref{eq:minmax} that $\lm_2^\aa(\Omg^{\rm c}) < \lm_2^\aa(\cB^{\rm c})$ and the inequality in (i) is proved.
	
	The estimate on the critical coupling constant
	$\aa_\star(\Omgc)$ in (ii)  follows as a by-product.
	Indeed, the eigenvalue inequality in~(i)
	implies that the negative spectral subspace of the operator $-\Delta_\aa^{\Omgc}$
	is of dimension at least two for all 
	\[
		\aa < -\frac{1}{R} = -\frac{1}{\pi}E(\p\cB) = -\frac{1}{\pi}E(\p\Omg).
	\] 
	Hence, we conclude by the definition of the critical coupling constant combined with the expression~\eqref{eq:elastic} for $E(\p\Omg)$ that
	\[
	\aa_\star(\Omgc)\ge-\frac{1}{2\pi}\int_0^L\kp^2(s)\dd s.\qedhere
	\]
\end{proof}
\begin{remark}\label{Rem.EL22}
	In~\cite[Theorem 1.3]{EL22} the inequality
	$\lm_2^\aa(\Omgc)<\lm_2^\aa(\cBc)$ for all $\aa <-\frac{1}{R}$ was proved under the assumption that $\max\kp \le \frac{1}{R}$, provided that $\Omg\subset\dR^2$ is a bounded convex $C^\infty$-smooth domain, not congruent to the disk $\cB$ of radius $R>0$. In this remark we show that this result can be derived as a consequence of the inequality in Theorem~\ref{thm2}\,(i). Indeed, under the geometric assumption $\max\kp\le \frac{1}{R}$ we have
	\[
		E(\p\Omg) = \frac{1}{2}\int_0^L\kp^2(s)\dd s< \frac{1}{2R}\int_0^L \kp(s)\dd s = \frac{\pi}{R} = E(\p\cB),
	\]
	where we used the total curvature identity.
	Hence, there exists $R_\star >R$ such that
	$E(\p\Omg) =\frac{\pi}{R_\star}$. Let $\cB_\star$ be the disk of radius $R_\star > 0$. 
	By the isoelastic inequality in Theorem~\ref{thm2}\,(i) combined with monotonicity property for the disks in
	Proposition~\ref{prop:disk}\,(ii), 
	one has
	\[
		\lm_2^\aa(\Omgc)\le \lm_2^\aa(\cB_\star^{\rm c}) < \lm_2^\aa(\cBc),\qquad\text{for all}\,\,\,\aa<-\frac{1}{R}.
	\]
	We also point out that under the assumption $\max\kp\le \frac{1}{R}$ the inclusion $\cB\subset\Omg$ holds.
\end{remark}
\begin{remark}
	We emphasise that under the constraint $E(\p\Omg) = E(\p\cB)$ the inclusion $\cB\subset\Omg$, in general, does not hold
	(even in the class of convex domains).
	It was proved in~\cite{HM17} that among bounded convex $C^{1,1}$-smooth domains with fixed elastic energy of the boundary the in-radius is minimised by a domain different from the disk. In the same paper this optimal domain was explicitly characterised. In particular, under the constraint $E(\p\Omg) = \pi$ the smallest possible in-radius among bounded convex $C^{1,1}$-domains is given by the formula
	\[
		r_{\rm i}^\star = \frac{2}{\pi}\left(\int_0^{\frac{\pi}{2}}\sqrt{\cos t}\dd t\right)^2\approx 0.914 < 1.
	\]
	The radius of the disk with the elastic energy equal to $\pi$ is clearly $R =1$. Thus, this disk can not be contained in the minimiser of the in-radius.
\end{remark}
\subsection*{Acknowledgement}
The first author (D.K.)
was supported by the EXPRO grant 
No. 20-17749X of the Czech Science Foundation (GA\v{C}R).
The second author (V.L.) acknowledges the support by 
the grant No.~21-07129S of the Czech Science Foundation (GA\v{C}R).

%

\newcommand{\etalchar}[1]{$^{#1}$}

\end{document}